\newtheorem{theorem}{Theorem}
\newtheorem{question}{Question}
\newtheorem{proposition}{Proposition}[section]
\newtheorem{corollary}[proposition]{Corollary}
\newtheorem{lemma}[proposition]{Lemma}
\newtheorem{definition}[proposition]{Definition}
\DeclareMathOperator{\dist}{dist}
\newcommand{\triple}[1]{{\left\vert\kern-0.25ex\left\vert\kern-0.25ex\left\vert #1 
    \right\vert\kern-0.25ex\right\vert\kern-0.25ex\right\vert}}
\newcommand{\I}{\mathcal{I}}
\newcommand{\threebar}[1]{{\left\vert\kern-0.25ex\left\vert\kern-0.25ex\left\vert #1 
    \right\vert\kern-0.25ex\right\vert\kern-0.25ex\right\vert}}
\newcommand{\J}{\mathcal{J}}
\newcommand{\R}{\mathbb{R}}
\newcommand{\A}{\mathsf{A}}
\newcommand{\D}{\mathsf{D}}
\newcommand{\C}{\mathsf{C}}
\newcommand{\B}{\mathsf{B}}
\newcommand{\N}{\mathbb{N}}
\newcommand{\Z}{\mathbb{Z}}
\title{On marginal growth rates of matrix products}
\author{Jonah Varney and Ian D. Morris}
\address{J. Varney: Mathematics Department, University of Surrey, Guildford GU2 7XH, United Kingdom}
\email{jonahvarney@gmail.com}
\address{I. D. Morris: School of Mathematical Sciences, Queen Mary University of London, Mile End Road, London E1 4NS, United Kingdom}
\email{i.morris@qmul.ac.uk }
\begin{document}

\begin{abstract}
In this article we consider the maximum possible growth rate of sequences of long products of $d \times d$ matrices all of which are drawn from some specified compact set which has been normalised so as to have joint spectral radius equal to $1$. We define the \emph{marginal instability rate sequence} associated to such a set to be the sequence of real numbers whose $n^{th}$ entry is the norm of the largest product of length $n$, and study the general properties of sequences of this form. We describe how new marginal instability rate sequences can be constructed from old ones, extend an earlier example of Protasov and Jungers to obtain marginal instability rate sequences whose limit superior rate of growth matches various non-integer powers of $n$, and investigate the relationship between marginal instability rate sequences arising from finite sets of matrices and those arising from sets of matrices with cardinality $2$. We also give the first example of a finite set whose marginal instability rate sequence is asymptotically similar to a polynomial with non-integer exponent. Previous examples had this property only along a subsequence.
\end{abstract}
\maketitle


\section{Introduction and overview}

If $\mathsf{A}$ is a bounded, nonempty set of real $d \times d$ matrices, the \emph{joint spectral radius} of $\mathsf{A}$ is defined to be the limit
\begin{equation}\label{eq:jsr}\varrho(\mathsf{A})=\lim_{n \to\infty} \max_{A_1,\ldots,A_n \in \mathsf{A}} \left\|A_n\cdots A_1\right\|^{\frac{1}{n}}= \inf_{n \geq1} \max_{A_1,\ldots,A_n \in \mathsf{A}} \left\|A_n\cdots A_1\right\|^{\frac{1}{n}}\end{equation}
where the existence of the limit is guaranteed by a subadditivity argument. (Here and throughout this article $\|\cdot\|$ will refer always to the Euclidean norm on vectors or to the corresponding operator norm on matrices, as appropriate.) The joint spectral radius of $\mathsf{A}$ is zero if and only if every product of $d$ elements of $\mathsf{A}$ is zero, and this in turn occurs precisely when the elements of $\mathsf{A}$ may be simultaneously conjugated to a set of matrices all of which are upper triangular with zero diagonal (see \cite[\S2.3]{Ju09}). In the remainder of this article we will concern ourselves exclusively with the situation in which $\varrho(\A) > 0$.

The joint spectral radius arises naturally in a number of contexts including the stability theory of linear inclusions in discrete time \cite{Ba88,Gu95}, the regularity of wavelets, fractals and solutions to refinement equations \cite{DaLa92,KoKuCh19,Pr06} and certain combinatorial enumeration problems \cite{BlCaJu09,DuSiTh99}. While its study ultimately dates back to the 1960 article \cite{RoSt60} (reprinted in \cite{Ro03}) it has been the subject of intensive and systematic research principally in the last three decades, and continues to be an active topic of research (see e.g. \cite{BrSe21,Pr21,WaMaMa21}).

In this article we will be concerned with the growth of sequences of the form
\begin{equation}\label{eq:that-sequence}n \mapsto  \max_{A_1,\ldots,A_n \in \mathsf{A}} \left\|A_n\cdots A_1\right\|\end{equation}
at a finer scale of detail than the exponential growth rate considered in \eqref{eq:jsr}. By consideration of the definition \eqref{eq:jsr} it is clear that any sequence of the form \eqref{eq:that-sequence} may be written in the form $\varrho(\A)^n a_n$ where $(a_n)$ satisfies $\lim_{n \to \infty} a_n^{1/n}=1$, and our interest will be in the structure of the sequences $(a_n)$ which arise in this way. It is easily seen that the sequence $(a_n)$ associated to the set $\A$ in this way is left unchanged if $\A$ is replaced with the set $\{ \lambda^{-1}A \colon A \in \A\}$ for some $\lambda>0$, so by replacing $\A$ with $\{ \varrho(\A)^{-1}A \colon A \in \A\}$ where necessary we will without loss of generality specialise our investigation to the study of sequences of the form \eqref{eq:that-sequence} for sets of matrices $\A$ which satisfy $\varrho(\A)=1$. We will refer to such sequences $(a_n)$ as \emph{marginal instability rate sequences}.

This choice of terminology is motivated by the relevance of these sequences to \emph{linear switching systems} which motivates much previous research such as \cite{BlTs00,ChMaSi12,Mo22,PrJu15,Su08}. If $\A$ is a set of real $d \times d$ matrices, the discrete-time linear switching system associated to $\A$ is the set of all sequences $(v_n)_{n=0}^\infty$ which satisfy $v_{n+1}=A_n v_n$ for every $n \geq 1$, for some sequence $(A_n)_{n=1}^\infty \in \A^{\N}$ (called a \emph{switching law}) and initial vector $v_0 \in \R^d$. Linear switching systems have been extensively studied in the control theory literature as models of a dynamical system which admits several ``states'' or behaviours $A \in \A$, and which evolves according to whichever state is chosen by a controller to apply at each time $n \geq 1$. In the case where $\varrho(\A)<1$ it is immediate that all trajectories of the linear switching system defined by $\A$ converge exponentially to the origin and that the rate of convergence is uniform with respect to the choice of switching law and of initial vector (provided that the initial vector is chosen to be a unit vector). In this case the system is referred to as being exponentially stable. On the other hand in the case $\varrho(\A)>1$ there always exists a trajectory which escapes to infinity at exponential speed (see e.g. \cite[Corollary 1.2]{Ju09}) and in this case the system is conventionally called exponentially unstable. The case $\varrho(\A)=1$ is subdivided into the case where the trajectories whose initial vector has unit norm are uniformly bounded (usually called the \emph{marginally stable} case) and the case where no such uniform bound exists (usually called the \emph{marginally unstable}) case. If $\A$ satisfies $\varrho(\A)=1$ and defines a marginally unstable linear switching system then the value $a_n$ of the associated sequence $(a_n)$ is precisely the largest possible norm achievable by a trajectory of the linear switching system at time $n$, assuming that the switching law may be chosen arbitrarily and that the initial vector $v_0$ is allowed to be an arbitrary unit vector. This relationship between marginally unstable linear switching systems and the sequences $(a_n)$ discussed in this article has motivated a substantial amount of previous research on the latter, as noted above; on the other hand, research into marginal instability rate sequences has also been motivated by the study of \emph{$k$-regular sequences} in theoretical computer science \cite{BeCoHa14,BeCoHa16} and by a number of other considerations: for a brief discussion see \cite[\S6]{JuPrBl08}.


In previous research it has been observed independently by a number of authors (such as in \cite{Be05,GuZe01,Su08}) that if $(a_n)$ is a marginal instability rate sequence then the asymptotic growth of $(a_n)$ satisfies constraints additional to the trivial property $\lim_{n \to \infty} a_n^{1/n}$: specifically, we necessarily have $C^{-1} \leq  a_n \leq Cn^{d-1}$ where $d$ is the dimension of the matrices used to define the sequence $(a_n)$ and $C>0$ is a real number depending only on $\A$. The bound $O(n^{d-1})$ was subsequently refined in \cite{ChMaSi12} to an integer power $O(n^\ell)$ (where in general we may have $\ell<d-1$) which depends on certain features of simultaneous block triangularisations of the elements of $\A$. Further works such as \cite{BeCoHa16,JuPrBl08,PrJu15} investigated the problem of determining when $C^{-1}n^\alpha \leq a_n  \leq Cn^\alpha$ for some $\alpha \in [0,d]$ and constant $C>0$, and what values this exponent $\alpha$ may take when this equivalence holds, particularly in the case where $\A$ is a finite set. It was recently demonstrated in \cite{Mo22} that we do not universally have $C^{-1}n^\alpha \leq a_n  \leq Cn^\alpha$ for some $\alpha \in [0,d]$ and that the sequence $\log a_n / \log n$ may in fact be divergent.

The overall structure of this article is as follows. In the next section we formally define marginal instability rate sequences, present two fundamental examples and collect together some minor preliminary results which will be of use in later sections. The subsequent sections  then pass through results demonstrating that the class of marginal instability rate sequences (when calculated specifically with respect to the Euclidean operator norm) is closed under the operations of termwise maximum and termwise product; that the marginal instability rate sequence of a \emph{finite} set $\A$ may in some cases be represented as the marginal instability rate sequence of a second finite set $\B$ which has exactly two elements; and that if $(a_n)$ is a marginal instability rate sequence then the sequence of ratios $a_{n+1}/a_n$ cannot accumulate at zero. We also relate certain marginal instability sequences to subadditive sequences, modify a construction of Protasov and Jungers from \cite{PrJu15} so as to construct marginal instability rate sequences satisfying $a_n \leq C n^\gamma$ for all $n \geq 1$ and $n^{\gamma}_\ell \geq C^{-1} a_{n_\ell}$ along certain subsequences $(n_\ell)$, where the exponent $\gamma >0$ may take any prescribed value in a certain range. We also give an example of a pair of matrices whose marginal instability rate sequence $(a_n)$ satisfies $C^{-1}n^{1/3} \leq a_n \leq Cn^{1/3}$ for all $n \geq 1$, strengthening an example given in \cite{PrJu15} which had the weaker property that $a_n \leq C n^{1/3}$ for all $n\geq 1$ and $n^{1/3} \leq C a_n$ for infinitely many $n \geq 1$.
\section{Preliminaries}

\subsection{Fundamental definitions and notation}

Throughout this work $M_d(\R)$ denotes the set of all $d \times d$ real matrices and $M_d(\mathbb{C})$ the set of all $d \times d$ complex matrices. The notation $M_{d_1 \times d_2}(\R)$ denotes the set of all real matrices of dimensions $d_1 \times d_2$. Given a set $\A \subset M_d(\R)$ and $n \geq 1$ we will write $\A_n:=\{A_1\cdots A_n \colon A_i \in \A\}$ for every $n \geq 1$.

\begin{definition}
A sequence of real numbers $(a_n)_{n=1}^\infty$ is called a \emph{marginal instability rate sequence} if there exist an integer $d \geq 1$, a norm $\threebar{\cdot}$ on $M_d(\R)$ and a set $\mathsf{A} \subset M_d(\R)$ satisfying $\varrho(\mathsf{A})=1$ such that $a_n=\max_{A \in \A_n}\threebar{A}$ for every $n \geq 1$. We describe this relation by saying that $(a_n)$ is the marginal instability rate sequence of $\A$ with respect to the norm $\threebar{\cdot}$.
\end{definition}
In general the above definition permits a marginal instability rate sequence to depend on the choice of norm on $M_d(\R)$, and so a single set $\A$ will in general admit multiple different marginal instability rate sequences $(a_n)$. On the other hand since any two norms on a finite-dimensional space are equivalent to one another, if $(a_n)$ and $(a_n')$ are both marginal instability rate sequences for the same set $\A$ then the ratio $a_n/a_n'$ is bounded away from zero and infinity independently of $n$. In general, therefore, comparison between marginal instability rate sequences, or comparison of a marginal instability rate sequence and another sequence, is only meaningful to within multiplication by a uniform scalar constant. To reflect this we introduce the following additional notation. Given sequences of positive real numbers $(a_n)$, $(b_n)$ we use the notation $a_n \apprle b_n$ to mean that there exists $C>0$ such that $a_n \leq Cb_n$ for all $n \geq 1$, and we write $a_n \simeq b_n$ in the case where both $a_n \apprle b_n$ and $a_n \apprge b_n$. If $(n_\ell)_{\ell=1}^\infty$ is a subsequence of the natural numbers then we will also write $(a_{n_\ell}) \apprle (b_{n_\ell})$ to mean that $a_{n_\ell} \leq C b_{n_\ell}$ for all $\ell \geq 1$, for some unspecified positive constant $C$. In many circumstances we will by default calculate marginal instability rate sequences with respect to the Euclidean norm, but in certain arguments the use of an alternative norm will prove advantageous. In this article these alternative norms will most often be operator norms induced by norms on $\R^d$, but there is no necessity for this constraint to hold in general.

\subsection{Two fundamental examples}

The following simple result is occasionally remarked on in works on the present subject such as \cite{GuZe01,Mo22}. Since it may be regarded as a fundamental motivating example we include a proof for completeness.
\begin{proposition}
Let $\A\subset M_d(\R)$ be a singleton set $\A=\{A\}$ such that $\varrho(\A)=1$. Then the marginal instability rate sequence $(a_n)$ of $\A$ satisfies $a_n \simeq n^k$ for some integer $k$ in the range $0 \leq k<d$. 
\end{proposition}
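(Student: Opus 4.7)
The plan is to reduce the problem to the Jordan normal form of $A$ and then compute the growth rate of powers of individual Jordan blocks.

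First I would pass to the Jordan canonical form $A=PJP^{-1}$ over $\mathbb{C}$, where $J$ is block diagonal with Jordan blocks $J_{m_i}(\lambda_i)=\lambda_i I+N_i$ with $N_i$ the $m_i\times m_i$ nilpotent upper shift. Since any two norms on $M_d(\mathbb{C})$ are equivalent, $\|A^n\|\simeq \|J^n\|$ with implicit constants depending on $P$ but not on $n$, and $\|J^n\|$ is in turn comparable to the maximum of the block norms $\|J_{m_i}(\lambda_i)^n\|$.

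Next I would analyse a single block. Since $\lambda_iI$ and $N_i$ commute, the binomial theorem gives
\[
J_{m}(\lambda)^n \;=\; \sum_{j=0}^{m-1}\binom{n}{j}\lambda^{n-j}N^{j}
\qquad\text{for all } n\geq m-1,
\]
since $N^j=0$ for $j\geq m$. The summands live on distinct super-diagonals, so no cancellation occurs, and inspecting the $(1,m)$-entry shows $\|J_m(\lambda)^n\|\geq \bigl|\binom{n}{m-1}\bigr|\,|\lambda|^{n-m+1}$, while the triangle inequality gives a matching upper bound. Thus $\|J_m(\lambda)^n\|\simeq n^{m-1}|\lambda|^{n-m+1}$ when $\lambda\neq 0$ (and equals $0$ for $n\geq m$ when $\lambda=0$).

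I would then combine the blockwise estimates. The hypothesis $\varrho(\mathsf{A})=1$ is, for a singleton, the statement that the spectral radius of $A$ equals $1$, so every eigenvalue satisfies $|\lambda_i|\leq 1$ and at least one satisfies $|\lambda_i|=1$. Contributions from blocks with $|\lambda_i|<1$ decay exponentially and are globally bounded, while contributions from blocks with $|\lambda_i|=1$ grow like $n^{m_i-1}$. Setting $k:=\max\{m_i-1:|\lambda_i|=1\}$ yields $\|A^n\|\simeq n^k$, with $k$ a non-negative integer and $k\leq d-1<d$ since each $m_i\leq d$.

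I do not expect a serious obstacle here. The only point that requires genuine attention is the lower bound for a single Jordan block: one must rule out the possibility that the various $\binom{n}{j}\lambda^{n-j}N^{j}$ terms cancel and drop the growth below $n^{m-1}$. This is handled, as noted above, by the observation that these terms occupy distinct super-diagonals of the matrix, so that a single matrix entry already realises the asserted growth rate.
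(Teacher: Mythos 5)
Your proposal is correct and follows essentially the same route as the paper: pass to Jordan form over $\mathbb{C}$, use the binomial expansion $J_m(\lambda)^n=\sum_{j=0}^{m-1}\binom{n}{j}\lambda^{n-j}N^j$ for a single block, observe that the leading super-diagonal forces $\|J_m(\lambda)^n\|\simeq n^{m-1}$ when $|\lambda|=1$, and take the maximum over blocks with unit-modulus eigenvalue while the sub-unit blocks decay.
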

\begin{proof}
By Gelfand's formula it is clear that $\varrho(\A)$ is simply the ordinary spectral radius $\rho(A)$ of the matrix $A$, so it is sufficient to show that if $A \in M_d(\R)$ satisfies $\rho(A)=1$ then $\|A^n\|\simeq n^k$ for some integer $k$ in the range $0\leq k<d$. It will be convenient to work in $M_d(\mathbb{C})$ instead of $M_d(\R)$.

Let $A \in M_d(\mathbb{C})$ with $\rho(A)=1$. Given any invertible matrix $X \in M_d(\mathbb{C})$ it is clear that $\|A^n\|\simeq \|X^{-1}A^nX\|=\|(X^{-1}AX)^n\|$ so by replacing $A$ with a similar matrix we may without loss of generality assume that $A$ is in Jordan normal form. If $A$ is equal to a simple Jordan matrix $J=\lambda I +N$ where $N^{d-1} \neq 0$ and $N^d=0$, then $|\lambda|=\rho(A)=1$ and we have
\[\|A^n\| = \|J^n\| = \left\|\sum_{j=0}^{n} {n \choose j} \lambda^{n-j} N^j \right\| = \left\|\sum_{j=0}^{d-1}{n \choose j} \lambda^{n-j}N^j \right\| \]
for all $n \geq d$. Since ${n \choose j} \simeq n^j$ for each $j\geq 0$ and since $N^{d-1}\neq 0$ it follows easily that $\|A^n\|\simeq {n \choose d-1} \simeq n^{d-1}$ in this case. In the general case let us write $A$ as a direct sum $A=J_1 \oplus J_2 \oplus \cdots \oplus J_m\oplus R$ where each $J_j$ is a simple Jordan matrix of spectral radius $1$ and dimenson $\ell_j$, say, and where $\rho(R)$ is either zero-dimensional or has spectral radius strictly less than $1$. It is clear that $\|A^n\| \simeq \max_{1 \leq j \leq m} \|J_j^n\| \simeq n^{\max_j \ell_j -1}$ and since $1 \leq \max_{1 \leq j\leq m} \ell_j \leq \sum_{j=1}^m \ell_j \leq d$ we are done.
\end{proof}
We will call a set $\A\subset M_d(\R)$ \emph{reducible} if there exists a vector subspace $V \subseteq \R^d$ which satisfies $0 < \dim V <d$ and  $\bigcup_{A \in \A} AV\subseteq V$. If $\A$ is not reducible then we call it \emph{irreducible}. We also call $\A$ \emph{product bounded} if $\sup_{n \geq 1} \sup_{A \in \A_n} \|A\|<\infty$. The following important facts have a long history, being closely related to results of N.E. Barabanov \cite{Ba88} and of Rota and Strang \cite{RoSt60} respectively:
\begin{proposition}\label{pr:irr}
Let $d \geq 1$ and let $\mathsf{A} \subset M_d(\R)$ be compact and nonempty. Then:
\begin{enumerate}[(i)]
\item
If $\A$ is irreducible and satisfies $\varrho(\A)=1$, then it is product bounded.
\item
If $\A$ is product bounded then there exists a norm $\threebar{\cdot}$ on $\R^d$ which satisfies $\max_{A \in \mathsf{A}_n}\threebar{A}\leq 1$ for every $n \geq 1$.
\end{enumerate}
\end{proposition}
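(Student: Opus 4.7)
The plan is to handle the two parts in reverse order, since (ii) supplies the core construction that also drives (i).

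For (ii), I would use the Rota--Strang-type invariant
\[
\threebar{v} := \sup_{n \geq 0}\ \sup_{A \in \A_n \cup \{I\}} \|Av\|
\]
on $\R^d$, with the convention $\A_0 = \{I\}$. Product boundedness (together with compactness of $\A$) bounds $\threebar{\cdot}$ above by a uniform multiple of $\|\cdot\|$, so it is everywhere finite; the $n=0$ term yields $\threebar{v} \geq \|v\|$ and hence positive definiteness, while homogeneity and the triangle inequality pass through the supremum from the Euclidean norm. Then for any $A \in \A_m$,
\[
\threebar{Av} = \sup_{n \geq 0,\ B \in \A_n \cup \{I\}} \|BAv\| \leq \sup_{n \geq m,\ C \in \A_n} \|Cv\| \leq \threebar{v},
\]
because $BA \in \A_{n+m}$ whenever $B \in \A_n$ (the $B=I$ case yielding $BA = A \in \A_m$). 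Hence every $A \in \A_m$ has $\threebar{\cdot}$-operator norm at most $1$, which is the claim.

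For (i), I would run the same construction while permitting $\threebar{\cdot}$ to take the value $+\infty$ a priori. The set $V := \{v \in \R^d : \threebar{v} < \infty\}$ is then a linear subspace of $\R^d$, and the inequality $\threebar{Av} \leq \threebar{v}$ derived above shows $V$ is $\A$-invariant. Irreducibility forces $V \in \{\{0\}, \R^d\}$; in the second case finite-dimensionality gives $\threebar{\cdot}$ equivalent to $\|\cdot\|$, which is exactly product boundedness.

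The hard part is therefore ruling out $V = \{0\}$, equivalently, producing at least one nonzero vector with bounded $\A$-orbit. This is the content of Barabanov's classical extremal-norm theorem \cite{Ba88}. The standard route is to work first with the perturbed quantities $\|v\|_\epsilon := \sup_{n,\ A \in \A_n \cup \{I\}} \|Av\|/(1+\epsilon)^n$ for $\epsilon > 0$; each is a genuine norm on $\R^d$ by the Rota--Strang estimate applied at $\varrho(\A)+\epsilon$. Rescaling so that $\max_{\|v\|=1} \|v\|_\epsilon = 1$ and extracting a subsequential limit $\threebar{\cdot}_0$ as $\epsilon \to 0^+$ via compactness of the space of normalized seminorms on $\R^d$, one obtains a seminorm satisfying $\max_{A \in \A} \threebar{Av}_0 \leq \threebar{v}_0$ whose kernel is $\A$-invariant and, by the normalization, strictly smaller than $\R^d$. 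Irreducibility then forces the kernel to be $\{0\}$, so $\threebar{\cdot}_0$ is in fact a norm under which every vector has bounded $\A$-orbit; consequently $V = \R^d$, and (i) follows from the reduction above.
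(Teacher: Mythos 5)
Your proof of part (ii) is the same as the paper's: both define the Rota--Strang invariant norm $\threebar{v}=\sup_{n\geq 0}\sup_{A\in\A_n}\|Av\|$ (with $\A_0=\{I\}$), note that product boundedness makes the supremum finite, use the $n=0$ term for positive definiteness, and read off $\max_{A\in\A}\threebar{Av}\leq\threebar{v}$ directly from the definition. Part (i), however, you treat very differently. The paper simply cites \cite[Theorem 2.1]{Ju09} without proof; you instead give a self-contained derivation along the lines of Barabanov's extremal-norm theorem: form the discounted norms $\|v\|_\epsilon=\sup_{n}\sup_{A\in\A_n\cup\{I\}}\|Av\|/(1+\epsilon)^n$, which are genuine norms because $\varrho(\A/(1+\epsilon))<1$; normalize on the Euclidean unit sphere; pass to a subsequential limit seminorm $\threebar{\cdot}_0$ by Arzel\`a--Ascoli; observe that the limit satisfies $\max_{A\in\A}\threebar{Av}_0\leq\threebar{v}_0$ and has supremum $1$ on the unit sphere, so its kernel is a proper $\A$-invariant subspace; and conclude by irreducibility that the kernel is trivial, whence $\threebar{\cdot}_0$ is a genuine norm under which every $A\in\A$ is a contraction and product boundedness follows. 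This argument is correct; its merit over the paper's citation is that it makes the proposition self-contained and exposes the mechanism behind the quoted theorem, at the cost of some length. One minor remark: once you have the norm $\threebar{\cdot}_0$ with $\max_{A\in\A}\threebar{Av}_0\leq\threebar{v}_0$, product boundedness is immediate, so the preliminary discussion of the subspace $V=\{v:\threebar{v}<\infty\}$ (which merely records that the desired conclusion is equivalent to $V=\R^d$) is not logically required, though it does motivate what the Barabanov construction must deliver.
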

\begin{proof}
The statement (i) is a special case of \cite[Theorem 2.1]{Ju09}. To prove (ii) it suffices to define
\[\threebar{v}:=\sup_{n \geq 0} \sup_{A \in \A_n} \|Av\|\]
for all $v \in \R^d$, where $\A_0$ is understood to be the singleton set containing the identity matrix. The hypothesis that $\A$ is product bounded ensures that $\threebar{\cdot} \colon \R^d \to [0,\infty)$ is well-defined. The property $\max_{A \in \A} \threebar{Av} \leq \threebar{v}$ follows immediately from the definition, and it is also straightforward to verify that $\threebar{\cdot}$ has the properties of a norm.
\end{proof}
A fundamental consequence of the above is that interesting marginal instability rate sequences arise exclusively from reducible sets $\A$:
\begin{corollary}\label{co:lon}
Let $(a_n)$ be the marginal instability rate sequence of a bounded nonempty set $\A\subset M_d(\R)$ with respect to some norm on $M_d(\R)$. Then $(a_n)$ is bounded below by a positive constant, and if $\A$ is irreducible then additionally $(a_n)$ is bounded above.
\end{corollary}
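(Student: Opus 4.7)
The plan is to derive both bounds directly from the joint spectral radius formula \eqref{eq:jsr} and Proposition~\ref{pr:irr}, using only that all norms on the finite-dimensional space $M_d(\R)$ are mutually equivalent.

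For the lower bound, I would start from the second equality in \eqref{eq:jsr}, which expresses $\varrho(\A)$ as an infimum rather than a limit. Since $\varrho(\A)=1$, this yields $\max_{A\in\A_n}\|A\|\geq\varrho(\A)^n=1$ for every $n\geq 1$, where $\|\cdot\|$ is the Euclidean operator norm. Passing from $\|\cdot\|$ to the norm $\threebar{\cdot}$ used to define $(a_n)$ costs only a multiplicative constant: by equivalence of norms on $M_d(\R)$ there exists $c>0$ with $\threebar{B}\geq c\|B\|$ for all $B\in M_d(\R)$, hence $a_n=\max_{A\in\A_n}\threebar{A}\geq c$ for every $n$.

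For the upper bound in the irreducible case, I would invoke Proposition~\ref{pr:irr}(i) to conclude that $\A$ is product bounded, i.e.\ $M:=\sup_{n\geq 1}\sup_{A\in\A_n}\|A\|<\infty$. Applying norm equivalence in the opposite direction, there exists $C>0$ with $\threebar{B}\leq C\|B\|$ for all $B$, and therefore $a_n\leq CM$ for every $n$.

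There is no real obstacle here: the corollary is essentially a straightforward consequence of \eqref{eq:jsr} combined with Proposition~\ref{pr:irr}(i), modulo the routine observation that the choice of norm on $M_d(\R)$ only perturbs $(a_n)$ by a bounded multiplicative factor. The only point worth stating carefully is that compactness of $\A$ is not needed for the lower bound (only boundedness and the hypothesis $\varrho(\A)=1$), whereas the upper bound uses the compactness hypothesis implicit in the application of Proposition~\ref{pr:irr}(i).
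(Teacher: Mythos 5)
Your proof is correct and follows essentially the same route as the paper: the lower bound comes from the infimum form of \eqref{eq:jsr}, and the upper bound from Proposition~\ref{pr:irr}(i) giving product boundedness for irreducible sets with unit joint spectral radius, with norm equivalence handling the change of norm in both directions. The only cosmetic difference is that the paper passes through Proposition~\ref{pr:irr}(ii) to get an extremal norm $\threebar{\cdot}$ making $a_n''\equiv 1$, whereas you use product boundedness directly from (i); your version is marginally more economical but the key ingredients are identical, and your side remark that the lower bound needs only boundedness while the upper bound invokes compactness (via Proposition~\ref{pr:irr}) is an accurate observation.
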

\begin{proof}
Let $(a_n')$ denote the marginal instability rate sequence of $\A$ calculated according to the Euclidean norm. It follows directly from \eqref{eq:jsr} that $a_n' \geq 1$ for every $n \geq 1$ since otherwise the property $\varrho(\A)=1$ is contradicted, whence $a_n \simeq a_n' \apprge 1$ as required. If $\A$ is irreducible, let $\threebar{\cdot}$ be the norm on $\R^d$ given by Proposition \ref{pr:irr} and let $(a_n'')$ be the marginal instability rate sequence of $\A$ with respect to $\threebar{\cdot}$, then $a_n \simeq a_n'' \equiv 1$ as required.
\end{proof}

\subsection{Miscellaneous lemmas}

We collect here some minor results which will be useful in later sections.
\begin{lemma}\label{le:irr-e}
If $\A\subset M_d(\R)$ is compact, nonempty and irreducible then there exists $\varepsilon>0$ such that for every $k \geq 1$, for every $B \in M_{d\times k}(\R)$ we have $\max_{A \in \A} \|AB\| \geq \varepsilon \|B\|$.
\end{lemma}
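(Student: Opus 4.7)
The plan is to reduce the matrix statement to the single-vector statement
\[\inf_{\|w\|=1,\, w \in \R^d} \max_{A \in \A} \|Aw\| > 0,\]
and then derive the latter from joint continuity, compactness of the unit sphere, and the irreducibility hypothesis.

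For the reduction, I observe that both sides of the desired inequality are positively homogeneous in $B$, so I may assume $\|B\|=1$. Since the operator norm is attained on the compact unit ball of $\R^k$, there exists a unit vector $x \in \R^k$ with $\|Bx\|=1$. Setting $w := Bx$ produces a unit vector in $\R^d$, and for every $A \in \A$ the trivial estimate $\|AB\| \geq \|ABx\| = \|Aw\|$ gives $\max_{A \in \A}\|AB\| \geq \max_{A \in \A}\|Aw\|$. Any $\varepsilon$ that works for the single-vector case therefore serves for every $k \geq 1$ simultaneously, so the dependence on $k$ vanishes automatically.

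For the single-vector case, define $f(w) := \max_{A \in \A}\|Aw\|$. Joint continuity of $(A,w)\mapsto \|Aw\|$ together with compactness of $\A$ makes $f$ continuous on $\R^d$, and by compactness of the unit sphere $f$ attains its infimum $\varepsilon$ on $S^{d-1}$. If $\varepsilon = 0$, then some $w_0 \in S^{d-1}$ satisfies $Aw_0 = 0$ for every $A \in \A$, so $V := \sspan\{w_0\}$ is a one-dimensional proper subspace of $\R^d$ that is $\A$-invariant, contradicting irreducibility provided $d \geq 2$; the degenerate case $d=1$ is either vacuous or handled by direct inspection of the single scalar in $\A$.

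I anticipate no serious obstacle: the argument is essentially a packaging of compactness, continuity, and the irreducibility hypothesis. The only point deserving momentary attention is verifying that the $\varepsilon$ produced by the vector-level argument is uniform in $k$, but this is built into the reduction from $B$ to $w=Bx$ above, since $\varepsilon$ is ultimately defined purely in terms of $\A$ and the ambient dimension $d$.
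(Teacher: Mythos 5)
Your proof is correct and follows essentially the same two-step strategy as the paper: reduce the matrix estimate to the vector estimate $\min_{\|w\|=1}\max_{A\in\A}\|Aw\|>0$ by choosing a unit vector on which $B$ attains its norm, and then deduce the vector estimate from compactness plus the observation that a zero minimum would yield a one-dimensional invariant subspace, contradicting irreducibility. The only difference is cosmetic (you present the reduction before the vector case, the paper after), and both treatments leave the degenerate $d=1$ situation implicit, which is harmless given how the lemma is invoked.
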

\begin{proof}
Fix such a set $\A$ throughout the proof. We first show that there exists $\varepsilon>0$ such that for every $v \in \R^d$ we have $\max_{A \in \A} \|Av\| \geq \varepsilon \|v\|$. By homogeneity it suffices to show this for all unit vectors $v$, which is to say we must show that
\[ \min_{v \in \R^d \colon \|v\|=1}\max_{A \in \A} \|Av\|>0.\]
If this is false then by continuity and compactness there exists a unit vector $v \in \R^d$ such that $\max_{A \in \A}\|Av\|=0$. In this case the one-dimensional subspace of $\R^d$ spanned by $v$ is preserved by every $A \in \A$, contradicting irreducibility, and we deduce the existence of $\varepsilon>0$ with the aforementioned property. Now given $k \geq 1$ and $B \in M_{d \times k}(\R)$ choose a unit vector $w \in \R^k$ such that $\|Bw\|=\|B\|$. We have
\[\max_{A \in \A} \|AB\| =\max_{A \in \A} \max_{u \in \R^k \colon \|u\|=1} \|ABu\| \geq \max_{A \in \A} \|ABw\| \geq \varepsilon \|Bw\|=\varepsilon\|B\|\]
as required.
\end{proof}
Though elementary, the following result will be repeatedly found useful:
\begin{lemma}\label{le:tricalc}
Let $A \in M_{d_1}(\R)$, $B \in M_{d_2}(\R)$, $D \in M_{d_1\times d_2}(\R)$. Then
\[\max\{\|A\|, \|B\|\}\leq \left\|\begin{pmatrix} A&D\\0&B\end{pmatrix}\right\| \leq \max\{\|A\|+\|D\|, \|B\|+\|D\|\}\]
where $\|D\|$ denotes the Euclidean operator norm of $D$ considered as a linear map $\R^{d_2} \to \R^{d_1}$.
\end{lemma}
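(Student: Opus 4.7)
The block matrix in question acts on $\R^{d_1}\oplus \R^{d_2}$ by sending $(u,v)$ to $(Au+Dv,Bv)$, and the plan is to read off both inequalities from this formula by choosing test vectors on the one hand and applying a direct expansion on the other.

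For the lower bound, I would pick $(u,0)$ with $\|u\|=1$ and $u$ chosen so that $\|Au\|=\|A\|$, giving the inequality $\|A\|\leq \bigl\|\bigl(\begin{smallmatrix}A&D\\0&B\end{smallmatrix}\bigr)\bigr\|$. Similarly, taking $(0,v)$ with $\|v\|=1$ achieving $\|Bv\|=\|B\|$ produces the image $(Dv,Bv)$, whose Euclidean norm is at least $\|Bv\|=\|B\|$. Taking the maximum yields the lower bound.

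For the upper bound, fix a unit vector $(u,v)$ with $\|u\|^2+\|v\|^2=1$ and set $\alpha=\|u\|$, $\beta=\|v\|$, together with the shorthand $a=\|A\|$, $b=\|B\|$, $\delta=\|D\|$ and $M_0=\max\{a+\delta,b+\delta\}$. The triangle inequality gives
\[\left\|\begin{pmatrix}A&D\\0&B\end{pmatrix}\begin{pmatrix}u\\v\end{pmatrix}\right\|^2=\|Au+Dv\|^2+\|Bv\|^2\leq (a\alpha+\delta\beta)^2+b^2\beta^2.\]
Since $a\leq M_0-\delta$ and $b\leq M_0-\delta$, and $\alpha,\beta\in[0,1]$, I would then compare this with $M_0^2=((M_0-\delta)+\delta)^2$; after expanding both sides the desired inequality reduces to
\[(M_0-\delta)^2(\alpha^2+\beta^2)+2(M_0-\delta)\delta\,\alpha\beta+\delta^2\beta^2\leq (M_0-\delta)^2+2(M_0-\delta)\delta+\delta^2,\]
which follows from $\alpha^2+\beta^2=1$, $\alpha\beta\leq 1$ and $\beta\leq 1$. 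Taking the supremum over unit $(u,v)$ completes the upper bound.

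Both steps are essentially routine; if anything, the only mildly delicate point is ensuring that the upper-bound expansion is carried out so as to exploit precisely the constraints $\alpha^2+\beta^2=1$, $\alpha\beta\leq 1$ and $\beta\leq 1$, rather than something coarser that would give a weaker estimate such as $\max\{\|A\|,\|B\|\}+\|D\|$ plus an unwanted error term.
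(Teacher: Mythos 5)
Your proof is correct. The lower bound is handled exactly as in the paper: test on unit vectors of the form $(u,0)$ and $(0,v)$ and take the maximum. For the upper bound, however, you take a genuinely different route. The paper simply writes the block matrix as $\bigl(\begin{smallmatrix}A&0\\0&B\end{smallmatrix}\bigr)+\bigl(\begin{smallmatrix}0&D\\0&0\end{smallmatrix}\bigr)$ and applies the triangle inequality for the operator norm, observing that $\|\bigl(\begin{smallmatrix}A&0\\0&B\end{smallmatrix}\bigr)\|=\max\{\|A\|,\|B\|\}$ and $\|\bigl(\begin{smallmatrix}0&D\\0&0\end{smallmatrix}\bigr)\|=\|D\|$ — a one-line argument. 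You instead expand $\|Mw\|^2$ for a generic unit vector $w=(u,v)$, bound each block contribution, and close the estimate using $\alpha^2+\beta^2=1$, $\alpha\beta\le1$, $\beta\le1$. Your algebra checks out (the replacement of $a,b$ by $M_0-\delta\ge0$ is legitimate, and the final comparison $m^2+2m\delta\alpha\beta+\delta^2\beta^2\le(m+\delta)^2$ holds). The paper's route is considerably shorter and avoids the vector-level bookkeeping, but yours is sound. One small remark about your closing paragraph: the quantity you flag as a possibly ``weaker estimate,'' namely $\max\{\|A\|,\|B\|\}+\|D\|$, is actually identically equal to the stated bound $\max\{\|A\|+\|D\|,\|B\|+\|D\|\}$, since adding a constant commutes with taking a maximum; so the concern about losing sharpness there is moot, and the paper's triangle-inequality derivation gives the claimed bound with no error term at all.
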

\begin{proof}
 For the lower bound we simply note that
\begin{align*}\left\|\begin{pmatrix} A&D\\0&B\end{pmatrix}\right\| &=\max_{\substack{u \in \R^{d_1}, v \in \R^{d_2}\\ \|u\|^2+\|v\|^2=1}} \left\|\begin{pmatrix} A&D\\0&B\end{pmatrix}\begin{pmatrix}u\\ v\end{pmatrix}\right\|   \\
&\geq \max\left\{\max_{\substack{u \in \R^{d_1} \\ \|u\|=1}} \left\|\begin{pmatrix} A&D\\0&B\end{pmatrix}\begin{pmatrix}u\\ 0\end{pmatrix}\right\| , \max_{\substack{v \in \R^{d_2}\\ \|v\|=1}} \left\|\begin{pmatrix} A&D\\0&B\end{pmatrix}\begin{pmatrix}0\\ v\end{pmatrix}\right\| \right\} \\
&= \max\left\{\|A\|,  \max_{\substack{v \in \R^{d_2}\\ \|v\|=1}} \left\|\begin{pmatrix}Dv\\ Bv\end{pmatrix}\right\| \right\}\geq \max\{\|A\|, \|B\|\}\end{align*}
as required. To obtain the upper bound we observe that by the triangle inequality
\[\left\|\begin{pmatrix} A&D\\0&B\end{pmatrix}\right\| \leq \left\|\begin{pmatrix} A&0\\0&B\end{pmatrix}\right\|+\left\|\begin{pmatrix} 0&D\\0&0\end{pmatrix}\right\|=\max\{\|A\|, \|B\|\}+\|D\|.\]
\end{proof}
The following now-standard result may be obtained easily as a corollary of the Berger-Wang formula,
\[\varrho(\A)=\sup_{n \geq 1} \sup_{A \in \A} \rho(A)^{\frac{1}{n}},\]
an identity which may be found in many sources such as \cite{BeWa92,Bo03,Br22,El95,Ju09,Or20}. Strictly speaking the below result more commonly appears as a lemma in the proof of the Berger-Wang formula, as is the case for example in \cite{BeWa92,Ju09}.
\begin{proposition}\label{pr:uti}
Let $\A \subset M_d(\R)$ be bounded and nonempty. Suppose that there exists an integer $k\geq 1$ and invertible matrix $X \in M_d(\R)$ such that we may write
\begin{equation}\label{eq:blocky}A_i = X^{-1} \begin{pmatrix} A^{(1)}_i& D^{(1,2)}_i &  D^{(1,3)}_i& \cdots &D^{(1,k)}_i\\ 
0& A^{(2)}_i& D^{(2,2)}_i &   \cdots &D^{(2,k)}_i\\ 
0& 0& A^{(3)}_i&   \cdots &D^{(3,k)}_i\\ 
\vdots &\vdots&\vdots&\ddots&\vdots\\
0&0&0&\cdots&A_i^{(k)}
\end{pmatrix}X
\end{equation}
for every $A_i \in \A$, where each $A_i^{(j)}$ is a square matrix whose dimension depends only on $j$ and not on $i$. For each $j=1,\ldots,k$ define $\A^{(j)}=\{A^{(j)}_i \colon A_i \in \A\}$. Then $\varrho(\A)=\max_{1 \leq j \leq k} \varrho(\A^{(j)})$. 
\end{proposition}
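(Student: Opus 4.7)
The plan is to establish the two inequalities $\varrho(\A^{(j)}) \leq \varrho(\A)$ and $\max_{j} \varrho(\A^{(j)}) \geq \varrho(\A)$ separately, after first observing that conjugation by a fixed invertible matrix $X$ leaves the joint spectral radius invariant (this is immediate from the defining formula \eqref{eq:jsr} together with the norm-equivalence $\|X^{-1}BX\|\simeq \|B\|$). Accordingly I would assume throughout that $X=I$, so that every $A \in \A$ is literally block upper triangular with diagonal blocks $A^{(1)},\ldots,A^{(k)}$.

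For the inequality $\varrho(\A^{(j)})\leq \varrho(\A)$, the key structural point is that the product $A_n \cdots A_1$ of block upper triangular matrices is itself block upper triangular, with $j^{\mathrm{th}}$ diagonal block equal to $A_n^{(j)} \cdots A_1^{(j)}$ for each $j=1,\ldots,k$. An iterated application of the lower bound in Lemma~\ref{le:tricalc} (peeling off one block row and column at a time) yields $\|A_n^{(j)}\cdots A_1^{(j)}\| \leq \|A_n \cdots A_1\|$ for each $j$. Taking maxima over $A_1,\ldots,A_n \in \A$, raising to the power $1/n$, and letting $n\to\infty$ via \eqref{eq:jsr} gives $\varrho(\A^{(j)})\leq \varrho(\A)$ for each $j$.

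For the reverse inequality I would invoke the Berger--Wang formula stated in the excerpt, namely $\varrho(\A)=\sup_{n\geq 1}\sup_{A \in \A_n}\rho(A)^{1/n}$. Because any $A \in \A_n$ is block upper triangular with diagonal blocks $A^{(1)},\ldots,A^{(k)}$ obtained as the corresponding products of diagonal blocks of its factors in $\A$, its spectrum is the union of the spectra of the $A^{(j)}$, and in particular $\rho(A)=\max_{1\leq j\leq k}\rho(A^{(j)})$. Consequently
\[\varrho(\A)=\sup_{n \geq 1}\sup_{A \in \A_n}\max_{1 \leq j \leq k} \rho(A^{(j)})^{\frac{1}{n}}=\max_{1 \leq j \leq k}\sup_{n \geq 1}\sup_{B \in \A_n^{(j)}} \rho(B)^{\frac{1}{n}}=\max_{1 \leq j \leq k}\varrho(\A^{(j)}),\]
where the last equality is another application of Berger--Wang, now to each $\A^{(j)}$ individually. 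The interchange of $\sup_n$ and $\max_j$ is justified because the maximum is over the finite set $\{1,\ldots,k\}$.

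The main obstacle, in so far as there is one, is merely bookkeeping: one must be careful that the diagonal blocks of a product of block upper triangular matrices really are the products of the corresponding diagonal blocks (a direct computation), and that the Berger--Wang formula may be applied to both $\A$ and to each $\A^{(j)}$ (which holds as both are bounded nonempty subsets of matrix spaces). Given these elementary verifications the argument is essentially a one-line consequence of Berger--Wang.
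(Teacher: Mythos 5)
Your Berger--Wang argument is exactly the paper's (second, non-inductive) proof, so the proposal is correct and matches the paper's approach. Note only that your first paragraph, establishing $\varrho(\A^{(j)})\leq\varrho(\A)$ via Lemma~\ref{le:tricalc}, is redundant: the Berger--Wang chain of equalities in your second paragraph already delivers both inequalities at once.
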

\begin{proof}
The case $k=2$ is proved in \cite[Proposition 1.5]{Ju09}. The general case follows easily by induction on $k$, partitioning the blocks in \eqref{eq:blocky} into groups to make a $2\times 2$ block matrix and then applying the case $k=2$ as an induction step. Alternatively, note that for every $n \geq 1$ and $i_1,\ldots,i_n \in \I$,
\[\rho(A_{i_1}\cdots A_{i_n}) =\rho(XA_{i_1}\cdots A_{i_n}X^{-1}) = \max_{1 \leq j \leq k} \rho\left(A_{i_1}^{(j)}\cdots A_{i_n}^{(j)}\right)\]
since the characteristic polynomial of a block upper-triangular matrix is simply the product of the characteristic polynomials of the diagonal blocks. Consequently
\begin{align*}\varrho(\A)=\sup_{n \geq 1} \sup_{A \in \A_n} \rho(A)^{\frac{1}{n}} &= \sup_{n \geq 1} \sup_{A \in \A_n^{(j)}}  \max_{1 \leq j \leq k}\rho(A)^{\frac{1}{n}}\\
&= \max_{1 \leq j \leq k} \sup_{n \geq 1} \sup_{A \in \A_n^{(j)}} \rho(A)^{\frac{1}{n}}=\max_{1 \leq j \leq k} \varrho\left(\A^{(j)}\right)\end{align*}
where the first and last equations follow from the Berger-Wang formula.
\end{proof}

%
%

\section{Combining and modifying marginal instability rate sequences}
In this section we investigate the ways in which new examples of marginal instability rate sequences can be constructed from existing ones. During the preparation of this article we became aware that similar results were attained in \cite{GuZe01}, where the sequence $(\max_{1 \leq k \leq n} a_k)_{n=1}^\infty$ is studied. In this section all marginal instability rate sequences will be calculated with respect to the Euclidean norm on the appropriate space. The key results of this section are summarised as:
\begin{theorem}\label{th:first-obe}
Suppose that $(a_n)$ and $(b_n)$ are the marginal instability rate sequences of nonempty compact sets of matrices $\mathsf{A}$ and $\mathsf{B}$, calculated with respect to the Euclidean norm, and let $k \geq 1$. Then $(\max\{a_n,b_n\})$, $(a_n b_n)$ and $(a_n^k)$ are also the marginal instability rates of nonempty compact sets of matrices. If both $\mathsf{A}$ and $\mathsf{B}$ are finite, or consist only of invertible elements, or both, then the sets of matrices for which $(\max\{a_n,b_n\})$, $(a_n b_n)$ and $(a_n^k)$  are marginal instability rate sequences may be chosen to have the same property.
\end{theorem}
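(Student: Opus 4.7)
I would realise each of the three sequences by an explicit construction: direct sums for the termwise maximum, Kronecker (tensor) products for the termwise product, and tensor powers for the $k$-th power. Compactness of the resulting set follows in each case because it is the continuous image of a compact set.

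For $(\max\{a_n,b_n\})$, set $\C = \{A \oplus B : A \in \A, B \in \B\}$, where $\A \subset M_{d_1}(\R)$ and $\B \subset M_{d_2}(\R)$. A product of $n$ elements of $\C$ has the block-diagonal form $(A_n \cdots A_1) \oplus (B_n \cdots B_1)$, and since the Euclidean operator norm of a block-diagonal matrix equals the maximum of the norms of its diagonal blocks (a special case of Lemma \ref{le:tricalc} with $D=0$), the marginal norm of $\C$ at length $n$ equals $\max\{a_n, b_n\}$. Because every element of $\C$ is block upper-triangular with vanishing off-diagonal blocks, Proposition \ref{pr:uti} gives $\varrho(\C) = \max\{\varrho(\A), \varrho(\B)\} = 1$. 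Finiteness is preserved since $|\C| \leq |\A|\cdot|\B|$, and invertibility is preserved since $A\oplus B$ is invertible precisely when both summands are.

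For $(a_n b_n)$, set $\C = \{A \otimes B : A \in \A, B \in \B\}$. Then $(A_n \otimes B_n)\cdots(A_1 \otimes B_1) = (A_n \cdots A_1) \otimes (B_n \cdots B_1)$, and combining this with the multiplicativity $\|A \otimes B\| = \|A\|\,\|B\|$ of the Euclidean operator norm under Kronecker product, the marginal norm of $\C$ at length $n$ factors as $a_n b_n$ (we may optimise the $A_i$ and the $B_i$ independently). Passing to $n$-th roots in the definition \eqref{eq:jsr} yields $\varrho(\C) = \varrho(\A)\varrho(\B) = 1$. Finiteness is preserved since $|\C|\leq|\A|\cdot|\B|$, and invertibility since $\det(A\otimes B) = (\det A)^{d_2}(\det B)^{d_1}$.

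For $(a_n^k)$, take $\C = \{A^{\otimes k} : A \in \A\}$: a product of $n$ such matrices equals $(A_n \cdots A_1)^{\otimes k}$, with Euclidean operator norm $\|A_n \cdots A_1\|^k$, giving marginal rate $a_n^k$ and $\varrho(\C) = \varrho(\A)^k = 1$, while $|\C|\leq|\A|$ and $A^{\otimes k}$ is invertible iff $A$ is. (Alternatively, this case follows by iterating the product construction.) The only non-mechanical ingredient is the multiplicativity $\|A\otimes B\|=\|A\|\,\|B\|$ of the Euclidean operator norm, which I expect to be the principal technical point to invoke cleanly; it follows from $(A\otimes B)^T(A\otimes B) = (A^TA)\otimes(B^TB)$ together with $\rho(X\otimes Y)=\rho(X)\rho(Y)$ and the standard identity $\|M\|^2 = \rho(M^TM)$. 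Beyond that point I do not anticipate any obstacle, as each claim reduces to direct computation with block-diagonal or Kronecker products.
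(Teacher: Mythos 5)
Your proposal is correct and takes essentially the same route as the paper: Kronecker products for the termwise product (the paper's Proposition~\ref{pr:stick-with-the-prod}), tensor powers for the $k$-th power (Proposition~\ref{pr:power}), and direct sums for the termwise maximum, which is precisely the $D_i=0$ special case of the paper's Proposition~\ref{pr:maxy} after indexing the pairs by $\mathsf{A}\times\mathsf{B}$. Your supporting observations about compactness, $\varrho$, finiteness and invertibility, and the norm identity $\|A\otimes B\|=\|A\|\,\|B\|$ via $(A\otimes B)^{T}(A\otimes B)=(A^{T}A)\otimes(B^{T}B)$ are all sound and match the paper's reasoning.
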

This result will be obtained from the combination of three individual results which are given below. Each of these results is somewhat more detailed than Theorem \ref{th:first-obe} in that it makes the underlying construction explicit. We recall that the Kronecker product $A\otimes B$ of the matrices
\[A=\begin{pmatrix}
a_{11}&\cdots &a_{1d_1}\\
\vdots&\ddots&\vdots\\
a_{d_11}&\cdots &a_{d_1d_1}
\end{pmatrix} \in M_{d_1}(\R),\qquad 
B=\begin{pmatrix}
b_{11}&\cdots &a_{1d_2}\\
\vdots&\ddots&\vdots\\
b_{d_21}&\cdots &a_{d_2d_2}
\end{pmatrix} \in M_{d_2}(\R),\]
is defined to be the matrix
\[A\otimes B=\begin{pmatrix} 
a_{11}B &a_{12}B&\cdots &a_{1d_1}B\\
a_{21}B &a_{22}B&\cdots& a_{2d_1}B\\
\vdots&\vdots&\ddots&\vdots\\
a_{d_1}B&a_{d_12}B&\cdots &a_{d_1d_1}B
\end{pmatrix} \in M_{d_1d_2}(\R).
\]
The operation $\otimes$ is associative. For every $A \in M_d(\R)$ and $k \geq 1$ we define $A^{\otimes k} \in M_{kd}(\R)$ to be the matrix $A\otimes \cdots \otimes A$ with $k$ occurrences of $A$. For further details regarding this construction we direct the reader to \cite[\S4.2]{HoJo94}.

Our first result concerns products of marginal instability rate sequences:
\begin{proposition}\label{pr:stick-with-the-prod}
Let $\I$ and $\J$ be nonempty sets,  let $d_1,d_2 \geq 1$, and suppose that $\A=\{A_i \colon i \in \I\}\subset M_{d_1}(\R)$ and $\B=\{B_i \colon i \in \J\}\subset M_{d_2}(\R)$ are compact sets such that $\varrho(\A)=\varrho(\B)=1$. Let $(a_n)$ and $(b_n)$ be  the marginal instability rate sequences of $\A$ and $\B$ respectively, calculated with respect to the Euclidean norm. Define
\[\C:=\left\{A_i \otimes B_j \colon i \in \I\text{ and }j \in \J\right\}.\]
Then $\varrho(\C)=1$ and $c_n=a_n b_n$ for every $n \geq 1$.
\end{proposition}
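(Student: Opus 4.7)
The plan is to exploit two classical algebraic properties of the Kronecker product: the mixed-product identity $(A_1\otimes B_1)(A_2\otimes B_2) = (A_1A_2)\otimes (B_1B_2)$, and the fact that the Euclidean operator norm is multiplicative under Kronecker products, i.e.\ $\|A\otimes B\| = \|A\|\cdot\|B\|$ (both are standard, see \cite[\S4.2]{HoJo94}). Together these reduce the computation of norms of products in $\C$ to products of norms of products in $\A$ and $\B$.

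Concretely, I would first observe that an arbitrary element of $\C_n$ has the form
\[ C = (A_{i_1}\otimes B_{j_1})(A_{i_2}\otimes B_{j_2})\cdots(A_{i_n}\otimes B_{j_n}), \]
for some choice of indices $i_1,\ldots,i_n\in\I$ and $j_1,\ldots,j_n\in\J$. Iterating the mixed-product identity gives
\[ C = (A_{i_1}A_{i_2}\cdots A_{i_n})\otimes(B_{j_1}B_{j_2}\cdots B_{j_n}), \]
and then multiplicativity of $\|\cdot\|$ on Kronecker products yields $\|C\| = \|A_{i_1}\cdots A_{i_n}\|\cdot\|B_{j_1}\cdots B_{j_n}\|$. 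The crucial point is that the two index sequences $(i_k)$ and $(j_k)$ in the definition of $\C$ may be chosen completely independently of one another, so maximising over $C\in\C_n$ decouples into
\[ c_n = \Big(\max_{A\in\A_n}\|A\|\Big)\Big(\max_{B\in\B_n}\|B\|\Big) = a_n b_n, \]
as required.

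The statement $\varrho(\C) = 1$ then follows immediately from the definition \eqref{eq:jsr}: since $c_n = a_n b_n$, we have $c_n^{1/n} = a_n^{1/n}b_n^{1/n}\to 1\cdot 1 = 1$. (Alternatively one can apply Proposition \ref{pr:uti} together with the observation that $\C$ is a subset of a block-triangular extension, but the direct computation via $c_n^{1/n}$ is simpler given that we have already identified $c_n$.)

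There is no genuine obstacle in this argument; the only point requiring a little care is to ensure that every element of $\C_n$ is of the claimed tensor form and, conversely, that every pair of index sequences of length $n$ is realised by some element of $\C_n$. Both are transparent from the definition $\C = \{A_i\otimes B_j : i\in\I,\,j\in\J\}$, since the Cartesian product structure on the index set allows free independent choice of $i_k$ and $j_k$ at each position. The finiteness and invertibility claims in Theorem \ref{th:first-obe} (needed later for its corollary) will be automatic from the construction, since a Kronecker product of invertible matrices is invertible and $|\I\times\J|$ is finite when both $|\I|$ and $|\J|$ are finite.
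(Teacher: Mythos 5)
Your proof is correct and follows essentially the same route as the paper's: both rely on the mixed-product identity and the multiplicativity of the Euclidean norm under Kronecker products, both observe that the index sequences decouple so the maximum factors, and both derive $\varrho(\C)=1$ from the identity $c_n=a_nb_n$. No differences worth noting.
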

\begin{proof}
For every $A, A' \in M_{d_1}(\R)$ and $B,B' \in M_{d_2}(\R)$ we have $\|A\otimes B\|=\|A\|\cdot\|B\|$ and $(A\otimes B)(A'\otimes B')=(AA')\otimes (BB')$, see \cite[\S4.2]{HoJo94}. Let $n \geq 1$ be arbitrary and choose $i_1,\ldots,i_n \in \I$ and $j_1,\ldots,j_n \in \J$. By the preceding considerations it follows that
\begin{align*}\left\|(A_{i_1}\otimes B_{j_1})(A_{i_2}\otimes B_{j_2})\cdots (A_{i_n}\otimes B_{j_n})\right\|&=\left\|(A_{i_1}A_{i_2}\cdots A_{i_n}) \otimes (B_{j_1}B_{j_2}\cdots B_{j_n})\right\| \\
&=\left\|A_{i_1}A_{i_2}\cdots A_{i_n}\right\|\cdot \left\|B_{j_1}B_{j_2}\cdots B_{j_n}\right\|.\end{align*}
Consequently
\begin{align*}\max_{C \in \mathsf{C}_n} \|C\|&=
 \max_{\substack{i_1,\ldots,i_n \in \I\\ j_1,\ldots,j_n \in \J}}\left\|(A_{i_1}\otimes B_{j_1})(A_{i_2}\otimes B_{j_2})\cdots (A_{i_n}\otimes B_{j_n})\right\| \\
&= \left(\max_{i_1,\ldots,i_n \in \I} \left\|A_{i_1}A_{i_2}\cdots A_{i_n}\right\| \right) \left(\max_{j_1,\ldots,j_n \in \I} \left\|B_{j_1}B_{j_2}\cdots B_{j_n}\right\| \right)\\
&= \left(\max_{A \in \mathsf{A}_n} \|A\|\right)\left(\max_{B \in \mathsf{B}_n} \|B\|\right) \\
\end{align*}
for all $n \geq 1$.  It follows directly from the relevant definitions that $\varrho(\C)=\varrho(\A)\varrho(\B)=1$ and that $c_n=a_nb_n$ for every $n \geq 1$.
\end{proof}
Applying the above result inductively with $\A=\B$ and $\I=\J$ demonstrates immediately that every positive power of a marginal instability rate sequence is also a marginal instability rate sequence, but at the cost of exponentially increasing both the dimension of the matrices and the size of the indexing set $\I$. The second of these two constraints can however be easily sidestepped as follows.
\begin{proposition}\label{pr:power}
Let $d \geq1$ and let $\A\subset M_{d}(\R)$ be compact and nonempty. Suppose that $\varrho(\A)=1$. Let $k \geq 1$, define $\C:=\{A^{\otimes k}\colon A \in \mathsf{A}\}$ and denote the marginal instability rate sequences of $\A$ and $\C$ by $(a_n)$ and $(c_n)$ respectively. Then $\varrho(\C)=1$, and $c_n=a_n^k$ for every $n \geq 1$.
\end{proposition}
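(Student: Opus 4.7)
The plan is to reduce everything to the two fundamental Kronecker identities already cited from \cite[\S4.2]{HoJo94}, namely $\|A\otimes B\|=\|A\|\cdot\|B\|$ and $(A\otimes B)(A'\otimes B')=(AA')\otimes(BB')$, and then essentially repeat the computation of Proposition \ref{pr:stick-with-the-prod} with all factors equal.

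First I would establish the two $k$-fold analogues by induction on $k$ using associativity of $\otimes$: namely $\|A^{\otimes k}\|=\|A\|^k$ and, for $A,A'\in M_d(\R)$, $A^{\otimes k}\cdot (A')^{\otimes k}=(AA')^{\otimes k}$. Both follow in one line from the base cases and the fact that $\otimes$ is associative, so this step is essentially bookkeeping.

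Next, for any $n\geq 1$ and $A_{i_1},\ldots,A_{i_n}\in\A$, iterating the product identity gives
\[A_{i_1}^{\otimes k}A_{i_2}^{\otimes k}\cdots A_{i_n}^{\otimes k}=(A_{i_1}A_{i_2}\cdots A_{i_n})^{\otimes k},\]
and taking norms and using the norm identity yields
\[\left\|A_{i_1}^{\otimes k}\cdots A_{i_n}^{\otimes k}\right\|=\left\|A_{i_1}\cdots A_{i_n}\right\|^k.\]
Maximising over all $(i_1,\ldots,i_n)$ with factors in the respective sets gives $c_n=a_n^k$. The formula $\varrho(\C)=1$ is then immediate from $\varrho(\C)=\lim_{n\to\infty}c_n^{1/n}=\lim_{n\to\infty}a_n^{k/n}=\varrho(\A)^k=1$, using the hypothesis $\varrho(\A)=1$.

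There is no real obstacle here; the only thing to be slightly careful about is stating the $k$-fold identities cleanly before using them, and noting that the indexing set of $\C$ inherits the same cardinality and invertibility properties as that of $\A$ (since $A\mapsto A^{\otimes k}$ preserves invertibility, as $\det(A^{\otimes k})=(\det A)^{kd^{k-1}}$), which is what is needed to match the "finite" and "invertible elements" clauses of Theorem \ref{th:first-obe}.
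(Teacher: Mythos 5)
Your proof is correct and follows essentially the same route as the paper's: establish the $k$-fold Kronecker identities $\|A^{\otimes k}\|=\|A\|^k$ and $(AA')^{\otimes k}=A^{\otimes k}(A')^{\otimes k}$, iterate to get $\|A_{i_1}^{\otimes k}\cdots A_{i_n}^{\otimes k}\|=\|A_{i_1}\cdots A_{i_n}\|^k$, maximise, and conclude. Your closing remark on invertibility and cardinality being preserved is a correct and useful observation for the deduction of Theorem \ref{th:first-obe}, though it is not part of the proposition itself.
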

\begin{proof}
For every $m \geq 2$ and $A \in M_d(\R)$ we have $\|A^{\otimes m}\|=\|A\|^m$ by an elementary induction on $m$. Since $(B_1B_2)^{\otimes k} = B_1^{\otimes k}B_2^{\otimes k}$ for every $B_1,B_2 \in M_d(\R)$ it follows that for every $n \geq 1$
\[\max_{C \in \mathsf{C}_n}=\max_{A \in \mathsf{A}_n} \left\|A^{\otimes k}\right\| =\left(\max_{A \in \mathsf{A}_n} \|A\|\right)^k=a_n^k.\]
Consequently $\varrho(\mathsf{C})=\varrho(\mathsf{A})^k=1$ and $c_n=a_n^k$ for every $n \geq 1$ as required.
\end{proof}
We finally wish to show that the pairwise maximum of two marginal instability rate sequences is also a marginal instability rate sequence. We prove the following more general result:
\begin{proposition}\label{pr:maxy}
Let $\I$ be a nonempty set, let $d_1,d_2 \geq 1$ and suppose that $\A=\{A_i \colon i \in \I\}\subset M_{d_1}(\R)$ and $\B=\{B_i \colon i \in \I\}\subset M_{d_2}(\R)$ are compact and satisfy $\varrho(\A)=\varrho(\B)=1$. Let $(a_n)$ and $(b_n)$ denote the marginal instability rate sequences of $\A$ and $\B$ respectively and define $a_0, b_0:=1$. 
Let $D_i \in M_{d_1 \times d_2}(\R)$ for every $i \in \I$ and suppose that $\{\|D_i\| \colon i \in \I\}$ is bounded by a constant $K_0\geq 0$. Define
\[\C:=\left\{\begin{pmatrix} A_i&D_i \\0& B_i\end{pmatrix} \colon i\in \I\right\}\subset M_{d_1+d_2}(\R)\]
and let $(c_n)$ be the marginal instability rate sequence of $\C$. Then $\varrho(\C)=1$ and for every $n \geq 1$ we have
\[\max\{a_n, b_n\} \leq c_n \leq \max\{a_n,b_n\}+K_0\sum_{k=1}^n a_{k-1}b_{n-k}.\]
In particular, if $a_n=O(n^\alpha)$ and $b_n=O(n^\beta)$ for some $\alpha,\beta \geq 0$ then $c_n=O(n^{1+\alpha+\beta})$; and if $D_i=0$ for every $i \in \I$, then $c_n=\max\{a_n,b_n\}$ for every $n \geq 1$.
\end{proposition}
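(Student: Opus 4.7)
The plan is to first write down an explicit formula for a length-$n$ product $C_{i_1}\cdots C_{i_n}$ in block form, then bound its norm using Lemma~\ref{le:tricalc}, and finally take the supremum over the choice of $i_1,\ldots,i_n \in \I$. Concretely, set $\mathcal{A}_{[a,b]}:=A_{i_a}A_{i_{a+1}}\cdots A_{i_b}$ and $\mathcal{B}_{[a,b]}:=B_{i_a}\cdots B_{i_b}$ (empty products being identity matrices), and verify by induction on $n$ that
\[
C_{i_1}\cdots C_{i_n}=\begin{pmatrix}\mathcal{A}_{[1,n]}&E_n\\ 0&\mathcal{B}_{[1,n]}\end{pmatrix},\qquad E_n=\sum_{k=1}^n \mathcal{A}_{[1,k-1]}\,D_{i_k}\,\mathcal{B}_{[k+1,n]}.
\]
The inductive step is a direct $2\times 2$ block multiplication: if the formula holds at stage $n$, then right-multiplying by $C_{i_{n+1}}$ gives an upper-right block equal to $\mathcal{A}_{[1,n]}D_{i_{n+1}}+E_n B_{i_{n+1}}$, which is exactly the $n+1$ case. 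Crucially, the fact that $\A$ and $\B$ share the same index set forces the indices in both strings to agree and supports this formula.

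Having this, I apply Lemma~\ref{le:tricalc} to obtain
\[
\max\{\|\mathcal{A}_{[1,n]}\|,\|\mathcal{B}_{[1,n]}\|\}\leq \left\|C_{i_1}\cdots C_{i_n}\right\|\leq \max\{\|\mathcal{A}_{[1,n]}\|,\|\mathcal{B}_{[1,n]}\|\}+\|E_n\|,
\]
using that $\max\{\|A\|+\|D\|,\|B\|+\|D\|\}=\max\{\|A\|,\|B\|\}+\|D\|$. Submultiplicativity together with $\|D_{i_k}\|\leq K_0$ gives
\[
\|E_n\|\leq K_0\sum_{k=1}^n \|\mathcal{A}_{[1,k-1]}\|\cdot\|\mathcal{B}_{[k+1,n]}\|\leq K_0\sum_{k=1}^n a_{k-1}b_{n-k}.
\]
Taking the maximum over $i_1,\ldots,i_n \in \I$ then yields the two-sided bound on $c_n$ directly: the lower bound follows by choosing the $i_k$ to maximise the $A$-product or the $B$-product independently, while the upper bound aggregates the bound on $\|E_n\|$ with the diagonal contribution.

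For $\varrho(\C)=1$, the two-sided inequality together with $\limsup_n a_n^{1/n}=\limsup_n b_n^{1/n}=1$ and the observation that $\sum_{k=1}^n a_{k-1}b_{n-k}$ grows at most subexponentially immediately yields $\lim_n c_n^{1/n}=1$; alternatively one invokes Proposition~\ref{pr:uti} applied to the block decomposition defining $\C$. For the asymptotic consequence, if $a_n=O(n^\alpha)$ and $b_n=O(n^\beta)$ then $\sum_{k=1}^n a_{k-1}b_{n-k} = O(n^{1+\alpha+\beta})$ (estimating the convolution by $n$ times the product of the two sup-norm bounds), and the bound on $\max\{a_n,b_n\}$ is absorbed since $1+\alpha+\beta\geq\max\{\alpha,\beta\}$. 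Finally, when each $D_i=0$ the recursion for $E_n$ produces $E_n=0$, so the upper and lower bounds above collapse to equality and $c_n=\max\{a_n,b_n\}$. No serious obstacle arises; the only point requiring care is correctly bookkeeping the empty-product conventions at $k=1$ and $k=n$ so that the definitions $a_0=b_0=1$ slot cleanly into the convolution bound.
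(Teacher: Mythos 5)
Your proof is correct and takes essentially the same approach as the paper: the same induction to obtain the block-triangular product formula, the same invocation of Lemma~\ref{le:tricalc} and submultiplicativity to bound the off-diagonal block, and the same maximisation over index strings to produce the two-sided inequality. The only small divergence is in the $O(n^{1+\alpha+\beta})$ step, where you bound the convolution $\sum_{k=1}^n k^\alpha(n-k+1)^\beta$ crudely by $n\cdot n^\alpha\cdot n^\beta$, while the paper passes to a Riemann sum and identifies the limiting constant as $\int_0^1 x^\alpha(1-x)^\beta\,dx$; both suffice for the stated $O$-bound.
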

\begin{proof}
That $\varrho(\mathsf{C})=1$ follows directly from Proposition \ref{pr:uti}. A simple induction on $n$ shows that given $n \geq 1$ and $i_1,\ldots,i_n \in \I$, we have
\[C_{i_1}\cdots C_{i_n} =\begin{pmatrix} A_{i_1}\cdots A_{i_n} & \sum_{k=1}^n A_{i_1}\cdots A_{i_{k-1}}D_{i_{k}}B_{i_{k+1}} \cdots B_{i_n} \\ 0&B_{i_1}\cdots B_{i_n}\end{pmatrix}.\]
Fix $n \geq 1$ and $i_1,\ldots,i_n \in \I$. It follows using Lemma \ref{le:tricalc} that the norm $\|C_{i_1}\cdots C_{i_n}\|$ is bounded above by
\[\max\left\{\|A_{i_1}\cdots A_{i_n}\|, \|B_{i_1}\cdots B_{i_n}\|\right\} +  \sum_{k=1}^n \|A_{i_1}\cdots A_{i_{k-1}}D_{i_{k}}B_{i_{k+1}} \cdots B_{i_n}\|.\]
Now we have
\begin{align*} \sum_{k=1}^n \|A_{i_1}\cdots A_{i_{k-1}}D_{i_{k}}B_{i_{k+1}} \cdots B_{i_n}\| &\leq K_0 \sum_{k=1}^n \|A_{i_1}\cdots A_{i_{k-1}}\| \cdot\|B_{i_{k+1}} \cdots B_{i_n}\|\\
&\leq K_0\sum_{k=1}^n a_{k-1} b_{n-k}\end{align*}
using the definition of $K_0$ and the definition of the sequences $(a_n)$ and $(b_n)$, and it follows that
\[\|C_{i_1}\cdots C_{i_n}\| \leq \max\{\|A_{i_1}\cdots A_{i_n}\|, \|B_{i_1}\cdots B_{i_n}\|\}+ K_0\sum_{k=1}^n a_{k-1} b_{n-k}.\]
Since also
\[\|C_{i_1}\cdots C_{i_n}\| \geq \max\{ \|A_{i_1}\cdots A_{i_n}\|, \|B_{i_1}\cdots B_{i_n}\|\}, \]
taking the maximum over $i_1,\ldots,i_n \in \I$ yields 
\[\max\{a_n, b_n\} \leq c_n= \max_{C \in \mathsf{C}_n} \|C\| \leq \max\{a_n, b_n\}+ K_0\sum_{k=1}^n a_{k-1} b_{n-k}\]
as claimed. If $D_i=0$ for every $i \in \mathcal{I}$ then clearly we may take $K_0=0$ which yields the claimed identity $c_n=\max\{a_n,b_n\}$. If on the other hand we have $a_n=O(n^\alpha)$ and $b_n=O(n^\beta)$ for some real numbers $\alpha,\beta \geq 0$, choose $K_1, K_2>0$ such that $a_n \leq K_1(n+1)^\alpha$ and $b_n \leq K_2 (n+1)^\beta$ for all $n \geq 0$. Since for every $n \geq 1$
\begin{align*} c_n &\leq  a_n+b_n+ K_0\sum_{k=1}^n a_{k-1} b_{n-k}\\
&\leq K_1(n+1)^\alpha + K_2(n+1)^\beta + K_0K_1K_2 \sum_{k=1}^n k^\alpha (n-k+1)^\beta \end{align*}
we have
\[\limsup_{n \to \infty} n^{-1-\alpha-\beta} c_n \leq K_0K_1K_2\left( \limsup_{n \to \infty}  n^{-1-\alpha-\beta}\sum_{k=1}^n k^\alpha (n-k+1)^\beta\right),\]
and since
\begin{align*} \limsup_{n \to \infty}  n^{-1-\alpha-\beta}\sum_{k=1}^n k^\alpha (n-k+1)^\beta&=\limsup_{n \to \infty}  n^{-1-\alpha-\beta}\sum_{k=1}^n k^\alpha (n-k)^\beta\\
&=\lim_{n \to \infty}\frac{1}{n}\sum_{k=1}^n \left(\frac{k}{n}\right)^\alpha \left(1-\frac{k}{n}\right)^\beta\\
&=\int_0^1x^\alpha(1-x)^\beta dx \in (0,1) \end{align*}
by Riemann integration, it follows that $c_n=O(n^{\alpha+\beta+1})$ as required.
\end{proof}
The above result may easily be applied in combination with Corollary \ref{co:lon} to show by induction on $d$ that if $(a_n)$ is the marginal instability rate sequence of a compact nonempty set $\mathsf{A}\subset M_d(\R)$, then $O(n^{d-1})$. This result has been obtained previously on a great many occasions (see for example \cite{Be05,GuZe01,Lu06,Pr06,Su08}) and we omit this application of Proposition \ref{pr:maxy}. A minor additional step is required to deduce Theorem \ref{th:first-obe} from the above results:
\begin{proof}[Proof of Theorem \ref{th:first-obe}]
If $\mathsf{A} \subset M_{d_1}(\R)$ and $\mathsf{B} \subset M_{d_1}(\R)$ are nonempty and compact and let $(a_n)$ and $(b_n)$ be their respective marginal instability rate sequences. Define $\mathcal{I}=\mathsf{A} \times \mathsf{B}$ and for every $i=(A,B) \in \mathsf{A} \times \mathsf{B}$ write $A_i=A$, $B_i=B$. This presents $\mathsf{A}$ and $\mathsf{B}$ in a form suitable for the application of Propositions \ref{pr:stick-with-the-prod} and \ref{pr:maxy}. The application of Proposition  \ref{pr:power} is direct. Clearly if $\mathsf{A}$ and $\mathsf{B}$ are both finite, or both consist of invertible matrices, or both of these properties hold, then the same holds for the matrix sets constructed in these propositions.\end{proof}
\section{Dependence of marginal instability rate sequences on cardinality}

It is natural to ask whether the class of marginal instability rate sequences for finite sets of matrices is affected by constraints on the precise cardinality of the set of matrices. In this section we investigate the question of whether every marginal instability rate sequence for a finite set is similar to that of a set of cardinality 2. We note that a similar question in the context of the finiteness property for the joint spectral radius was investigated by R.M. Jungers and V.D. Blondel in \cite{JuBl08} and our method is inspired by their work (and by its subsequent adaptation in \cite{HaMoSi13}). While we are not able to obtain a complete answer to our question, we are able to show that any given marginal instability rate sequence  is guaranteed to be similar to that of a two-element set if it satisfies certain additional regularity conditions. These conditions are themselves investigated in a subsequent section. 

The following definition describes the regularity conditions which will be invoked below.
\begin{definition}
Let $(a_n)$ be a sequence of positive real numbers. We say that $(a_n)$ is \emph{weakly increasing} if there exists $\kappa>0$ such that $a_{n+m}\geq \kappa a_n$ for all $n,m \geq 1$ and that $(a_n)$ is \emph{weakly upper regularly varying} if for every $m \geq 1$ there exists $C_m>0$ such that $a_{nm}\leq C_m a_n$ for all $n \geq 1$.
\end{definition}
Clearly if $(a_n)$ satisfies either of these properties and $(b_n)$ is a sequence such that $a_n \simeq b_n$ then $(b_n)$ also has the corresponding property.  It is easily seen that a subadditive sequence is weakly upper regularly varying: one may take $C_m:=m$. In particular if $(a_n)$ is related to a non-decreasing sequence by the relation $\simeq$ then it is weakly increasing, and if $(a_n)$ is so related to a subadditive sequence then it is weakly upper regularly varying.

The name ``weakly upper regularly varying'' given above refers to the more standard notion of a \emph{regularly varying function} (see e.g. \cite{BiGoTe87}): a function $f \colon (0,\infty) \to (0,\infty)$ is conventionally called regularly varying if $\lim_{x\to \infty} f(\alpha x)/f(x)$ exists and is positive for every $\alpha>0$. We observe that if $f$ is a regularly varying function and $(a_n)$ satisfies $a_n=f(n)$ for every $n \geq 1$ then $(a_n)$ is weakly upper regularly varying. In particular the sequence $a_n:=n^\alpha (\log(n+1))^\beta$ is weakly upper regularly varying for every $\alpha,\beta \geq 0$. Conversely, an example of a sequence which is \emph{not} weakly upper regularly varying is the sequence $(b_n)$ defined by $b_n=n$ when $n$ is composite and $b_n=1$ when $n$ is prime, since in that case for every fixed integer $m>1$ the ratio $b_{mn}/b_n$ is unbounded as $n$ varies over the prime numbers.

The main result proved in this section is the following:
\begin{theorem}\label{th:BIG-TWOS}
Suppose that $\A=\{A_0,\ldots,A_{m-1}\} \subset M_d(\R)$ satisfies $\varrho(\A)=1$. Define two matrices $B_0, B_1 \in M_{md}(\R)$ by
\[B_0:=\begin{pmatrix}
0&0&0&\cdots &0&0&I\\
I&0&0&\cdots &0&0&0\\
0&I&0&\cdots &0&0&0&\\
\vdots&\vdots&\ddots &\ddots & &\vdots&\vdots\\

\vdots& \vdots & &\ddots &\ddots&\vdots &\vdots\\
0&0&0&\cdots&I&0&0\\
0&0&0&\cdots&0&I&0
\end{pmatrix}, \text{   }B_1:=\begin{pmatrix}
A_0&0&\cdots &0&0\\
0&A_1&\cdots &0&0\\
\vdots&\vdots &\ddots &\vdots&\vdots\\

0&0&\cdots&A_{m-2}&0\\
0&0&\cdots&0&A_{m-1}
\end{pmatrix}\]
and let $\B:=\{B_0,B_1\}\subset M_{md}(\R)$. Then $\varrho(\B)=1$, and if $(a_n)$, $(b_n)$ denote the marginal instability rate sequences of $\A$ and $\B$ respectively then the following properties hold:
\begin{enumerate}[(a)]
\item\label{it:whatever}
Let $(\gamma_n)_{n=1}^\infty$ be a weakly increasing, weakly upper regularly varying sequence of positive real numbers and $(n_\ell)_{\ell=1}^\infty$ a strictly increasing sequence of natural numbers such that the ratio $n_{\ell+1}/n_\ell$ is bounded above independently of $\ell$. Suppose that $a_n \apprle \gamma_n$ and $\gamma_{n_\ell} \apprle a_{n_\ell}$. Then $b_n \simeq \gamma_n$.
\item\label{it:you-know-what-will-get-you-you-know-where}
Suppose that $(a_n)$ is weakly increasing and weakly upper regularly varying. Then $b_n \simeq a_n$. 
\end{enumerate}
\end{theorem}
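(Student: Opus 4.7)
The plan is to analyse the structure of an arbitrary word in $\{B_0, B_1\}$ by exploiting the key commutation identity
\[ B_1 B_0^j = B_0^j B_1^{(j)}, \qquad j \geq 0, \]
where $B_1^{(j)}$ is the block-diagonal matrix whose $k$-th diagonal block is $A_{(k+j) \bmod m}$. Identifying $\R^{md}$ with $(\R^d)^m$ indexed by $\{0, \ldots, m-1\}$, the matrix $B_0$ implements the cyclic shift $(B_0 v)_k = v_{k-1 \bmod m}$ and $B_1$ acts block-diagonally, so the identity is verified by a direct computation; intuitively, conjugation by $B_0^j$ cyclically permutes the diagonal blocks of $B_1$.

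After grouping consecutive occurrences, any word of length $n$ may be written as $P = B_0^{k_0} B_1 B_0^{k_1} B_1 \cdots B_1 B_0^{k_p}$ where $p$ is the number of occurrences of $B_1$ and $k_0 + \cdots + k_p + p = n$. An easy induction, pushing all $B_0$'s to the left, rewrites this as
\[ P = B_0^{s_0} B_1^{(s_1)} B_1^{(s_2)} \cdots B_1^{(s_p)}, \qquad s_l := \sum_{j=l}^p k_j, \]
so that $s_0 \geq s_1 \geq \cdots \geq s_p \geq 0$ and $s_0 + p = n$. Since $B_0^{s_0}$ is an orthogonal (permutation) matrix and $B_1^{(s_1)} \cdots B_1^{(s_p)}$ is block-diagonal with block $A_{(k+s_1) \bmod m} \cdots A_{(k+s_p) \bmod m}$ in position $k$, this yields the norm formula
\[ \|P\| = \max_{0 \leq k < m} \left\| A_{(k+s_1) \bmod m} \cdots A_{(k+s_p) \bmod m} \right\|. \]

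Two key estimates follow. The upper bound $b_n \leq \max_{0 \leq p \leq n} a_p$ (with $a_0 := 1$) is immediate since the right-hand side above is a product of $p$ matrices from $\A$. For the lower bound, given any desired indices $i_1, \ldots, i_p \in \{0, \ldots, m-1\}$ I would set $s_p := i_p$ and $s_l - s_{l+1} := (i_l - i_{l+1}) \bmod m$, so that $s_l \equiv i_l \pmod m$ and $s_1 \leq (m-1)p$; for any $n \geq mp$ one may then choose $s_0 := n-p \geq s_1$, producing a valid word in $\B_n$ whose norm equals $\|A_{i_1} A_{i_2} \cdots A_{i_p}\|$. Maximising over $(i_j)$ gives $b_n \geq a_{\lfloor n/m \rfloor}$. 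Combining these bounds with the subexponential growth of $(a_n)$ (which follows from $\varrho(\A)=1$) forces $b_n^{1/n} \to 1$, so $\varrho(\B)=1$ and $(b_n)$ is a genuine marginal instability rate sequence.

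Both parts now follow by matching the sandwich $a_{\lfloor n/m \rfloor} \leq b_n \leq \max_{p \leq n} a_p$ against the stated regularity hypotheses. For (b), weak monotonicity of $(a_n)$ gives $\max_{p \leq n} a_p \leq \kappa^{-1} a_n$, while weak upper regular variation combined with weak monotonicity yields $a_n \apprle a_{\lfloor n/m \rfloor}$: writing $q := \lfloor n/m \rfloor$, one has $a_n \leq \kappa^{-1} a_{m(q+1)} \leq \kappa^{-1} C_m a_{q+1} \leq \kappa^{-2} C_m C_2 a_q$, where the last step uses $a_{q+1} \leq \kappa^{-1} a_{2q} \leq \kappa^{-1} C_2 a_q$. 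For (a), the upper bound $b_n \apprle \gamma_n$ follows from $a_n \apprle \gamma_n$ together with weak monotonicity of $(\gamma_n)$; for the lower bound, given $n$ one selects the largest $\ell$ with $m n_\ell \leq n$, so that $b_n \geq a_{n_\ell} \apprge \gamma_{n_\ell}$, and then uses $n \leq m n_{\ell+1} \leq m K n_\ell$ with $K := \sup_\ell n_{\ell+1}/n_\ell$, together with the weak upper regular variation and weak monotonicity of $(\gamma_n)$, to conclude $\gamma_{n_\ell} \apprge \gamma_n$. The principal technical obstacle is the structural lemma, in particular verifying that every desired sequence of matrices from $\A$ can be produced inside a $\B$-word of length exactly $n$ at the cost of only a factor of $m$ in the length; once this is in place the remaining arguments reduce to careful but routine juggling of the two regularity hypotheses.
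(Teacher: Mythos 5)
Your proof is correct, and it reaches the same two-sided sandwich estimate for $(b_n)$ as the paper does but by a genuinely different structural lemma. The paper observes directly that every product $B_{i_1}\cdots B_{i_n}$ is an $(m,d)$ block permutation matrix whose nonzero $d\times d$ blocks are $\A$-words of length $\le n$ (giving the upper bound), and then builds the lower bound from the explicit gadget $B_0^{m-\ell}B_1 B_0^{\ell}$, a block-diagonal matrix with $A_\ell$ in the top-left corner, concatenated to realise any $\A$-word of length $k$ inside a $\B$-word of length $k(m+1)$; this yields
\[
\max_{0 \le k \le \lfloor n/(m+1)\rfloor} a_k \ \le\ b_n \ \le\ \max_{0 \le k \le n} a_k.
\]
You instead push all occurrences of $B_0$ to the left via the commutation identity $B_1 B_0^{j} = B_0^{j} B_1^{(j)}$, obtaining the normal form $P = B_0^{s_0} B_1^{(s_1)}\cdots B_1^{(s_p)}$ and the exact norm formula $\|P\| = \max_{k}\|A_{(k+s_1)\bmod m}\cdots A_{(k+s_p)\bmod m}\|$; solving $s_\ell \equiv i_\ell \pmod m$ with $s_\ell - s_{\ell+1} < m$ then gives the slightly sharper $a_{\lfloor n/m\rfloor} \le b_n \le \max_{0 \le p \le n} a_p$ and pins down exactly which diagonal block carries the target product. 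Either sandwich is adequate for parts (a) and (b), and the subsequent manipulation of the weak-increasing and weak-upper-regular-variation hypotheses is essentially the same in both arguments (modulo the routine handling of small $n$ and the indices $\ell$ with $mn_\ell > n$, which you should make explicit). The deduction $\varrho(\B)=1$ also proceeds the same way, by combining the upper bound with the subexponential growth of $(a_n)$ and the trivial bound $\|B_0^n\|=1$. Your normal-form approach is arguably tighter and more mechanical; the paper's gadget argument is less efficient (losing $m+1$ instead of $m$) but somewhat more transparent.
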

\begin{proof}
In this proof it will be convenient to extend the sequences $(a_n)$ and $(b_n)$ by defining $a_0:=1$ and $b_0:=1$. In all cases an empty product of matrices will be understood as the identity matrix, and the sets $\A_0$ and $\B_0$ will be understood to be singleton sets each containing an identity matrix of the appropriate dimension. For convenience let us say that an \emph{$(m,d)$ block permutation matrix} is a matrix in $M_{md}(\R)$ which can be written as an $m\times m$ grid of $d \times d$ matrices with at most one nonzero matrix in each of the $m$ rows and at most one nonzero matrix in each of the $m$ columns. The Euclidean operator norm of an $(m,d)$ block permutation matrix is easily seen to be the maximum of the Euclidean operator norms of its $d \times d$ blocks. Clearly the $(m,d)$ block permutation matrices form a semigroup which contains both $B_0$ and $B_1$. Given $n\geq 1$ and $i_1,\ldots,i_n \in \{0,1\}$, the product $B_{i_1}\cdots B_{i_n}$ is easily seen to be an $(m,d)$ block permutation matrix whose nonzero blocks all have the form $A_{j_1}\cdots A_{j_k}$ for some integer $k$ in the range $0 \leq k \leq n$ and for some $j_1,\ldots,j_k \in \{0,\ldots,m-1\}$, where the values of $j_1,\ldots,j_k$ depend on which block of $B_{i_1}\cdots B_{i_n}$ is being considered. It follows directly that
\[ \left\|B_{i_1}\cdots B_{i_n}\right\| \leq \max_{0 \leq k \leq n} \max_{j_1,\ldots,j_k \in \{0,\ldots,m-1\}} \left\|A_{j_1}\cdots A_{j_k}\right\|\]
and we have shown that for every $n \geq 1$
\[\max_{B \in \B_n} \|B\| \leq \max_{0 \leq k \leq n} \max_{A \in \A_k} \|A\|.\]
This result clearly also holds for $n=0$. In the other direction, given any $\ell \in \{0,\ldots,m-1\}$ we have
\[B_0^{m-\ell} B_1 B_0^\ell = \begin{pmatrix}
A_{\ell}&0&\cdots &0&0\\
0&A_{\ell+1}&\cdots &0&0\\
\vdots&\vdots &\ddots &\vdots&\vdots\\
0&0&\cdots&A_{\ell +m-2}&0\\
0&0&\cdots&0&A_{\ell+m-1}
\end{pmatrix}\]
where the indices of the matrices $A_i$ are understood modulo $m$. In particular, given any $n\geq 1$, any integer $k$ in the range $0 \leq k \leq n/(m+1)$ and any indices $i_1,\ldots,i_k \in \{0,\ldots,m-1\}$, the matrix
\[\left(B_0^{m-i_1} B_1 B_0^{i_1}\right)\left(B_0^{m-i_2} B_1 B_0^{i_2}\right)\cdots \left(B_0^{m-i_k} B_1 B_0^{i_k}\right) \in \B_{k(m+1)}\]
is an $(m,d)$ block permutation matrix whose upper left block is $A_{i_1}\cdots A_{i_k}$. This matrix therefore has norm at least $\|A_{i_1}\cdots A_{i_k}\|$. Clearly the matrix
\[\left(B_0^{m-i_1} B_1 B_0^{i_1}\right)\left(B_0^{m-i_2} B_1 B_0^{i_2}\right)\cdots \left(B_0^{m-i_k} B_1 B_0^{i_k}\right)B^{n-k(m+1)}_0 \]
has the same norm as the former matrix and belongs to $\B_n$. This suffices for us to deduce 
\[ \max_{B \in \B_n}\|B\| \geq \max_{0 \leq k \leq \lfloor n/(m+1)\rfloor} \max_{A \in \A_k} \|A\|\]
and we have proved the bounds
\begin{equation}\label{eq:when-a-man-say-ow}\max_{0 \leq k \leq \lfloor n/(m+1)\rfloor} \max_{A \in \A_k} \|A\| \leq \max_{B \in \B_n}\|B\| \leq \max_{0 \leq k \leq n} \max_{A \in \A_k} \|A\|\end{equation}
for every $n \geq 0$.

To see that $\varrho(\B)=1$ we argue as follows. Given any $\varepsilon>0$, since $\varrho(\A)=1$ we may choose $L_\varepsilon>0$ such that $\max_{A \in \A_n}\|A\|\leq L_\varepsilon e^{n\varepsilon}$ for every $n\geq 1$. Consequently
\begin{align*}\varrho(\B)=\limsup_{n \to \infty} \max_{B \in \B_n} \|B\|^{\frac{1}{n}}
& \leq \limsup_{n \to \infty} \max_{0 \leq k \leq n} \max_{A \in \A_k} \|A\|^{\frac{1}{n}}\\
& \leq  \limsup_{n \to \infty} \max_{0 \leq k \leq n}\left(L_\varepsilon e^{k\varepsilon}\right)^{\frac{1}{n}}  =  \lim_{n \to \infty} \left(L_\varepsilon e^{n\varepsilon}\right)^{\frac{1}{n}} = e^\varepsilon\end{align*}
and since $\varepsilon>0$ was arbitrary we have $\varrho(\B)\leq 1$. On the other hand clearly $\max_{B \in \B_n} \|B\|^{1/n} \geq \|B_0^n\|^{1/n}=1$ for every $n \geq 1$ and it follows that $\varrho(\B)=1$ as claimed. 

We now prove \eqref{it:whatever}. The hypotheses imply that we may choose $\kappa>0$ such that $\gamma_{n_2} \geq \kappa \gamma_{n_1}$ whenever $1 \leq n_1 \leq n_2$, an integer $p$ such that $(m+1)n_{\ell+1} \leq pn_\ell$ for all $\ell \geq 1$, and a real constant $K>0$ such that $\gamma_{n_\ell} \leq K a_{n_\ell}$ for all $\ell \geq 1$ and such that $a_n \leq K\gamma_n$ and $\gamma_{pn} \leq K\gamma_n$ for all $n \geq 1$. Let $k \geq 1$ and suppose that $(m+1)n_\ell \leq k < (m+1)n_{\ell+1}$ for some $\ell \geq 1$. Using the weakly increasing and weakly upper regularly varying properties of $(\gamma_n)$ we have
\[\gamma_{(m+1)n_{\ell+1}} \leq \kappa^{-1}\gamma_{pn_\ell} \leq K\kappa^{-1}\gamma_{n_\ell}.\]
Consequently, using the lower bound from \eqref{eq:when-a-man-say-ow} and the weakly increasing property once more,
\[b_k \geq  \max_{0 \leq j < n_{\ell+1}} a_j\geq a_{n_\ell} \geq K^{-1} \gamma_{n_\ell}\geq K^{-2} \kappa \gamma_{(m+1)n_{\ell+1}} \geq  K^{-2}\kappa^2  \gamma_{k}.\]
The upper bound from \eqref{eq:when-a-man-say-ow} yields 
\begin{align*}b_k &\leq \max_{0 \leq k < (m+1)n_{\ell+1}} a_k\\
 &\leq K\max_{0 \leq k < (m+1)n_{\ell+1}} \gamma_k \leq  K\kappa^{-1} \gamma_{pn_\ell} \leq K^2\kappa^{-1}\gamma_{n_\ell} \leq K^2\kappa^{-2}\gamma_k\end{align*}
and we conclude that for every $\ell\geq 1$
\[K^{-2}\kappa^2 \leq \min_{(m+1)n_\ell \leq k <(m+1)n_{\ell+1}} \frac{b_k}{\gamma_k} \leq\max_{(m+1)n_\ell \leq k <(m+1)n_{\ell+1}}\frac{b_k}{\gamma_k} \leq K^2\kappa^{-2}.\]
Hence
\[K^{-2}\kappa^2 \leq \inf_{k \geq (m+1)m_1} \frac{b_k}{\gamma_k} \leq \sup_{k \geq (m+1)n_1} \frac{b_k}{\gamma_k} \leq K^2\kappa^{-2}\]
from which it follows that directly that
\[0< \inf_{k \geq 1} \frac{b_k}{\gamma_k} \leq \sup_{k \geq 1} \frac{b_k}{\gamma_k}<\infty\]
as required. To prove \eqref{it:you-know-what-will-get-you-you-know-where} we simply apply \eqref{it:whatever} with $n_\ell := \ell$ and $\gamma_n:=a_n$. 
\end{proof}

%
%

\section{Regularity properties of marginal instability rate sequences}

If we knew \emph{a priori} that every marginal instability rate sequence is both weakly increasing and weakly upper regularly varying then Theorem \ref{th:BIG-TWOS} would imply that every marginal instability rate sequence can be realised by a set of matrices with cardinality 2. In this section we investigate those two properties for general marginal instability rate sequences. Surprisingly we are unable to determine conclusively whether or not every marginal instability rate sequence has either property, but we give a sufficient condition for the second.
A version of the following result was used implicitly in \cite{Mo22}.
\begin{theorem}
Let $\A=\{A_i \colon i \in \I\} \subset M_d(\R)$ be a compact and nonempty set such that $\varrho(\A)=1$. Suppose that there exist an integer $k$ in the range $1 \leq k <d$, an invertible matrix $X \in M_d(\R)$ and sets of matrices $\B=\{B_i \colon i \in \I\}\subseteq M_k(\R)$,  $\C=\{C_i \colon i \in \I\}\subset M_{d-k}(\R)$ and $\D=\{D_i \colon i \in \I\}\subset M_{k \times (d-k)}(\R)$ such that we may write
\[A_i=X^{-1}\begin{pmatrix} B_i & D_i \\ 0&C_i\end{pmatrix}X \]
for every $i \in \I$. Suppose that both $\B$ and $\C$ are product bounded, and let $(a_n)$ denote the marginal instability rate sequence of $\A$. Then there exists a positive subadditive sequence $(\alpha_n)_{n=1}^\infty$ such that $a_n \simeq 1+\alpha_n$. In particular $(a_n)$  is weakly upper regularly varying, and the sequence $(a_n/n)$ either converges to zero or is bounded away from zero and infinity. 
\end{theorem}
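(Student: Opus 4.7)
The plan is to choose a norm on $\R^d$ adapted to the block structure so that the upper-right block of products of elements of $\A$ yields a subadditive sequence, and then read off the stated consequences from standard facts about subadditive sequences.

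First, I apply Proposition \ref{pr:irr}(ii) to the product-bounded sets $\B$ and $\C$, obtaining norms $\threebar{\cdot}_1$ on $\R^k$ and $\threebar{\cdot}_2$ on $\R^{d-k}$ in which every product from $\B$ (respectively $\C$) has induced operator norm at most $1$. After conjugating by $X$ I may assume $\A$ is already in the block form displayed in the statement, and I equip $\R^d \cong \R^k \oplus \R^{d-k}$ with the norm $\threebar{(u,v)} := \max\{\threebar{u}_1,\threebar{v}_2\}$. Since the marginal instability rate sequence is defined only up to $\simeq$, it suffices to verify $a_n \simeq 1+\alpha_n$ using the operator norm induced by this norm. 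Writing $B_{(1,n)} := B_{i_1}\cdots B_{i_n}$, $C_{(1,n)} := C_{i_1}\cdots C_{i_n}$, and letting $E_{(1,n)}$ denote the upper-right ($k \times (d-k)$) block of $A_{i_1}\cdots A_{i_n}$, a straightforward induction on $n$ yields the splitting
\[E_{(1,n+m)} = B_{(1,n)}\, E_{(n+1,n+m)} + E_{(1,n)}\, C_{(n+1,n+m)}.\]
Let $\threebar{\cdot}_{12}$ denote the operator norm on $M_{k\times(d-k)}(\R)$ viewed as maps $(\R^{d-k},\threebar{\cdot}_2)\to(\R^k,\threebar{\cdot}_1)$, and set $\alpha_n := 1 + \sup_{i_1,\ldots,i_n \in \I} \threebar{E_{(1,n)}}_{12}$. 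Combining the splitting with the invariance bounds $\threebar{B_{(1,n)}} \leq 1$, $\threebar{C_{(n+1,n+m)}} \leq 1$ produces $\alpha_{n+m} - 1 \leq (\alpha_n - 1) + (\alpha_m - 1)$, and in particular $\alpha_{n+m} \leq \alpha_n + \alpha_m$; so $(\alpha_n)$ is strictly positive and subadditive.

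To relate $(\alpha_n)$ to $(a_n)$, a direct computation from the definition of the max-type norm on $\R^d$ (analogous to Lemma \ref{le:tricalc} in the Euclidean setting) yields
\[\max\{\threebar{B},\threebar{C},\threebar{D}_{12}\} \leq \threebar{\begin{pmatrix}B & D\\ 0 & C\end{pmatrix}} \leq \max\{\threebar{B},\threebar{C}\} + \threebar{D}_{12}\]
for any block matrix of this form. Applying this to products in $\A_n$ and using $\threebar{B_{(1,n)}},\threebar{C_{(1,n)}}\leq 1$ reduces the bounds to $\threebar{E_{(1,n)}}_{12} \leq \threebar{A_{i_1}\cdots A_{i_n}} \leq 1 + \threebar{E_{(1,n)}}_{12}$. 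Taking suprema gives $\alpha_n - 1 \leq a_n \leq \alpha_n$; since $\alpha_n \geq 1$, and additionally $a_n \apprge 1$ by Corollary \ref{co:lon}, the relation $a_n \simeq 1+\alpha_n$ follows immediately.

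Finally, subadditivity of $(\alpha_n)$ gives $\alpha_{mn}\leq m\alpha_n$, whence $a_{mn}\apprle m\, a_n$, which is weak upper regular variation with $C_m$ proportional to $m$. Fekete's lemma supplies $L := \lim_{n\to\infty}\alpha_n/n = \inf_n\alpha_n/n \in [0,\alpha_1]$. If $L=0$ then $\alpha_n/n\to 0$, whence $a_n/n \simeq \alpha_n/n \to 0$; if $L>0$ then $\alpha_n/n$ is trapped in $[L,\alpha_1]$, and so $a_n/n$ is bounded away from both $0$ and $\infty$. The only delicate design choice is the selection of the adapted norms so that the splitting formula for $E_{(1,n+m)}$ collapses to clean subadditivity; beyond that, the rest is routine bookkeeping.
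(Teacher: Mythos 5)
Your proof is correct and follows essentially the same route as the paper's: apply Proposition~\ref{pr:irr}(ii) to the product-bounded diagonal blocks to get adapted norms, isolate the off-diagonal block of a length-$(n+m)$ product via the cocycle splitting $E_{(1,n+m)} = B_{(1,n)}E_{(n+1,n+m)} + E_{(1,n)}C_{(n+1,n+m)}$, deduce subadditivity of the resulting sequence, and read off the consequences from Fekete's lemma. The only genuine differences are cosmetic: you use the $\max$-type norm $\threebar{(u,v)}=\max\{\threebar{u}_1,\threebar{v}_2\}$ on $\R^k\oplus\R^{d-k}$ where the paper uses the $\ell^1$-type sum norm $\threebar{u}_1+\threebar{v}_2$ (which, via the identity $\max_n\max\{\threebar{B_{(1,n)}}_1,\threebar{C_{(1,n)}}_2\}=1$ coming from Proposition~\ref{pr:uti}, gives the slightly cleaner exact sandwich $\max\{1,\alpha_n\}\le \hat a_n\le 1+\alpha_n$ without invoking Corollary~\ref{co:lon}); and you define $\alpha_n$ with an added $1$, which incidentally makes $(\alpha_n)$ strictly positive in all cases, including the degenerate case $D_i\equiv 0$ where the paper's $\alpha_n$ would vanish.
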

\begin{proof}
Since both $\B$ and $\C$ are product bounded, by Proposition \ref{pr:irr}(ii) we may choose norms $\threebar{\cdot}_1$ and $\threebar{\cdot}_2$ on $\R^k$ and $\R^{d-k}$ such that $\max_{i \in \I} \threebar{B_i}_1\leq 1$ and $\max_{i \in \I}\threebar{C_i}_2 \leq 1$ respectively. Furthermore, for every $n \geq 1$ we have
\[1 =\varrho(\mathsf{A})=\max\{\varrho(\mathsf{B}), \varrho(\mathsf{C})\} \leq \max_{i_1,\ldots, i_n \in \I} \max\left\{ \threebar{B_{i_1}\cdots B_{i_n}}_1, \threebar{C_{i_1}\cdots C_{i_n}}_2\right\}\]
from Proposition \ref{pr:uti}, subadditivity and the definition of the joint spectral radius. Consequently
\begin{equation}\label{eq:banan}\max_{i_1,\ldots, i_n \in \I} \max\left\{ \threebar{B_{i_1}\cdots B_{i_n}}_1, \threebar{C_{i_1}\cdots C_{i_n}}_2\right\}=1\end{equation}
for every $n \geq 1$.

Let $\threebar{\cdot}$ denote the norm on $\R^d\simeq \R^k \oplus \R^{d-k}$ given by $\threebar{u\oplus v}:=\threebar{u}_1+\threebar{v}_2$, and for every $Z \in M_{k \times (d-k)}(\R)$ define
\[\threebar{Z}_*:=\max\left\{\threebar{Zv}_1 \colon v \in \R^{d-k}\text{ and }\threebar{v}_2=1\right\}.\]
Let $(a_n)$ be the marginal instability rate sequence of $\A$ computed with respect to the Euclidean norm, and for every $n \geq 1$ define
\[\hat{a}_n:= \max_{i_1,\ldots,i_n \in \I} \threebar{XA_{i_1}\cdots A_{i_n}X^{-1}},\]
\[\alpha_n:=  \max_{i_1,\ldots,i_n \in \I}\threebar{\sum_{j=1}^n B_{i_1}\cdots B_{i_{j-1}} D_{i_j}C_{i_{j+1}}\cdots C_{i_n}}_*.\]
Clearly $(\hat{a}_n)\simeq (a_n)$. Given $n \geq 1$ and $i_1,\ldots,i_n \in \I$, we have
\begin{align*}
{\lefteqn{\threebar{XA_{i_1}\cdots A_{i_n}X^{-1}}}} &\\
& =\threebar{\begin{pmatrix} B_{i_1}\cdots B_{i_n} & \sum_{j=1}^n B_{i_1}\cdots B_{i_{j-1}} D_{i_j}C_{i_{j+1}}\cdots C_{i_n} \\ 0&C_{i_1}\cdots C_{i_n}\end{pmatrix}}\\
&=\max\left\{ \threebar{B_{i_1}\cdots B_{i_n}}_1, \threebar{\sum_{j=1}^n B_{i_1}\cdots B_{i_{j-1}} D_{i_j}C_{i_{j+1}}\cdots C_{i_n}}_* +\threebar{C_{i_1}\cdots C_{i_n}}_2\right\}\end{align*}
and in view of \eqref{eq:banan} this implies $\max\{1, \alpha_n\} \leq \hat{a}_n \leq 1+\alpha_n$ for every $n \geq 1$. Thus $a_n \simeq \hat{a}_n \simeq 1+\alpha_n$ as needed.
To complete the proof it suffices to show that $\alpha_{n+m} \leq \alpha_n  +\alpha_m$ for every $n,m \geq 1$. Given $n,m \geq 1$ and $i_1,\ldots,i_{n+m} \in \I$ we may write
\begin{align*}{\lefteqn{\sum_{j=1}^{n+m} B_{i_1}\cdots B_{i_{j-1}} D_{i_j}C_{i_{j+1}}\cdots C_{i_{n+m}}}}&\\
&=
\begin{aligned}[t]
&\sum_{j=1}^{m} B_{i_{m+1}}\cdots B_{i_{j-1}} D_{i_j}C_{i_{j+1}}\cdots C_{i_{n+m}}\\
&+ B_{i_1}\cdots B_{i_m} \sum_{j=m+1}^{n+m} B_{i_{m+1}}\cdots B_{i_{j-1}} D_{i_j}C_{i_{j+1}}\cdots C_{i_{n+m}}
\end{aligned}
\end{align*}
and therefore
\begin{align*} {\lefteqn{\threebar{\sum_{j=1}^{n+m} B_{i_1}\cdots B_{i_{j-1}} D_{i_j}C_{i_{j+1}}\cdots C_{i_{n+m}}}_*}}&\\ 
&\leq 
\begin{aligned}[t]
& \threebar{\sum_{j=1}^{m} B_{i_{m+1}}\cdots B_{i_{j-1}} D_{i_j}C_{i_{j+1}}\cdots C_{i_{n+m}}}_*\\
&+ \threebar{B_{i_1}\cdots B_{i_m}}_1\cdot \threebar{\sum_{j=m+1}^{n+m} B_{i_{m+1}}\cdots B_{i_{j-1}} D_{i_j}C_{i_{j+1}}\cdots C_{i_{n+m}}}_*
\end{aligned}\\
&\leq 
\begin{aligned}[t]
& \threebar{\sum_{j=1}^{m} B_{i_{m+1}}\cdots B_{i_{j-1}} D_{i_j}C_{i_{j+1}}\cdots C_{i_{n+m}}}_*\\
& + \threebar{\sum_{j=m+1}^{n+m} B_{i_{m+1}}\cdots B_{i_{j-1}} D_{i_j}C_{i_{j+1}}\cdots C_{i_{n+m}}}_* \end{aligned}\\
&\leq \alpha_m + \alpha_n.\end{align*}
By taking the maximum over $i_1,\ldots,i_{n+m} \in \I$ we obtain $\alpha_{n+m}\leq \alpha_n + \alpha_m$ as required. The proof is complete.\end{proof}
For the property of being weakly increasing we have been able to make much less progress. The following much weaker statement is nonetheless surprisingly subtle to prove. 
\begin{theorem}\label{th:no-sudden-moves}
Let $\A \subset M_d(\R)$ be compact and nonempty with $\varrho(\A)=1$ and let $(a_n)$ be its marginal instability rate sequence. Then there exists $\kappa>0$ such that $a_{n+1} \geq \kappa a_n$ for every $n \geq 1$.
\end{theorem}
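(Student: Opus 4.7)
The plan is to combine the block upper triangular decomposition (Proposition \ref{pr:uti}) with the extension property of Lemma \ref{le:irr-e}, using a contradiction argument. Suppose no such $\kappa>0$ exists; then there is a subsequence $(n_k)$ and products $P^{(k)}\in\A_{n_k}$ with $\|P^{(k)}\|=a_{n_k}$ and $a_{n_k+1}/a_{n_k}\to 0$. Since $a_n\geq 1$ by Corollary \ref{co:lon}, this forces $a_{n_k}\to\infty$. The normalised matrices $\hat P^{(k)}:=P^{(k)}/a_{n_k}$ lie on the unit sphere of $M_d(\R)$, so along a subsequence they converge to some $\hat P$ with $\|\hat P\|=1$; from $\|AP^{(k)}\|,\|P^{(k)}A\|\leq a_{n_k+1}$ for every $A\in\A$ one obtains $A\hat P=0=\hat P A$, and by iteration via $a_{n_k+m}\leq M^{m-1}a_{n_k+1}$ (where $M:=\sup_{A\in\A}\|A\|$), also $B\hat P=0=\hat P B$ for every $B\in\A_m$ and every $m\geq 1$. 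In particular $\hat P$ is a nonzero matrix with $\hat P^2=0$, whose range lies in $\bigcap_{A\in\A}\ker A$ and whose kernel contains $\sum_{A\in\A}A(\R^d)$.

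To rule out such $\hat P$, I would conjugate $\A$ via Proposition \ref{pr:uti} into block upper triangular form with irreducible diagonal blocks $\B^{(1)},\ldots,\B^{(r)}$, each satisfying $\varrho(\B^{(j)})\leq 1$. By Proposition \ref{pr:irr}(i) each $\B^{(j)}$ is product bounded (when $\varrho(\B^{(j)})=1$) or has exponentially decaying products (when $\varrho(\B^{(j)})<1$), so the diagonal blocks of every $P\in\A_n$ are uniformly bounded by some constant $M_0$. Passing to a further subsequence, the normalised block norm $\|(P^{(k)})_{ij}\|/a_{n_k}$ converges in $[0,\infty)$ for each of the finitely many pairs $(i,j)$; since $a_{n_k}\to\infty$ while diagonal blocks are bounded, at least one off-diagonal pair must have a strictly positive limit. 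Among these ``dominant'' pairs, pick one $(l,m)$ of \emph{minimal} spacing $m-l$. Then for every $j>l$ the pair $(j,m)$ has strictly smaller spacing, so it cannot be dominant, and $\|(P^{(k)})_{jm}\|/a_{n_k}\to 0$.

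Now extend $P^{(k)}$ on the left by an element $A\in\A$: the $(l,m)$-block of $AP^{(k)}$ equals $A_{ll}(P^{(k)})_{lm}+\sum_{j=l+1}^{m}A_{lj}(P^{(k)})_{jm}$. Applying Lemma \ref{le:irr-e} to the irreducible set $\B^{(l)}$ with $(P^{(k)})_{lm}$ in the role of the matrix argument yields a choice of $A$ for which $\|A_{ll}(P^{(k)})_{lm}\|\geq\varepsilon_l\|(P^{(k)})_{lm}\|$, with $\varepsilon_l>0$ depending only on $\B^{(l)}$. By the previous step, $\sum_{j=l+1}^{m}\|A_{lj}(P^{(k)})_{jm}\|\leq(m-l)M\cdot o(a_{n_k})=o(a_{n_k})$, whereas $\|A_{ll}(P^{(k)})_{lm}\|$ is comparable to $a_{n_k}$. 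Hence, for $k$ large, the leading term dominates the correction and $\|AP^{(k)}\|\geq(\varepsilon_l/2)\|(P^{(k)})_{lm}\|\simeq a_{n_k}$, contradicting $a_{n_k+1}/a_{n_k}\to 0$.

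The main obstacle, in my view, lies in rigorously organising the passage to subsequences so that all finitely many normalised block norms stabilise simultaneously, and in verifying carefully that the minimality of $m-l$ among dominant pairs really does force $\|(P^{(k)})_{jm}\|/a_{n_k}\to 0$ for every $j>l$. This bookkeeping, together with the observation that correction blocks at strictly smaller spacings must be of strictly lower order, is the technical heart of the argument and, I suspect, is what accounts for the subtlety alluded to in the surrounding text.
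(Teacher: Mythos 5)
Your proposal is correct in its essentials, and it takes a genuinely different route from the paper.

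The paper proves the statement by induction on the dimension $d$: it uses a $2\times 2$ block upper triangular decomposition in which the top-left diagonal block set $\B$ is irreducible (taking the top block as small as possible), equips $M_d(\R)$ with a norm that sums the four block norms, and then runs a direct three-way case analysis on which of the top block, the off-diagonal accumulation, or the bottom block accounts for most of $\threebar{A_{i_1}\cdots A_{i_n}}$. Lemma~\ref{le:irr-e} handles the first two cases by one-step extension on the left, while the third case invokes the inductive hypothesis on the bottom block set $\C$ (which lives in lower dimension). The constant $\kappa$ obtained is explicit. Your proposal instead pushes the block triangularisation all the way to a form in which \emph{every} diagonal block set is irreducible, argues by contradiction along a subsequence with $a_{n_k+1}/a_{n_k}\to 0$, passes to a further subsequence along which all normalised block norms converge, and identifies a ``dominant'' off-diagonal pair $(l,m)$ of minimal spacing. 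The minimality ensures that all of the correction blocks $(j,m)$ with $l<j\le m$ are of lower order, so extending on the left via Lemma~\ref{le:irr-e} applied to $\B^{(l)}$ produces a product of length $n_k+1$ whose norm is still comparable to $a_{n_k}$, giving the contradiction. This replaces the paper's induction with a single compactness argument, at the cost of being non-constructive (no explicit $\kappa$); both proofs share Proposition~\ref{pr:uti} and Lemma~\ref{le:irr-e} as the underlying engine. One can argue your ``dominant pair of minimal spacing'' device does essentially what the paper's three-way case split plus induction achieves, but in one shot.

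Two small remarks. First, the opening paragraph of your proposal --- the extraction of a nilpotent limit $\hat P$ with $A\hat P=\hat P A=0$ and $\hat P^2=0$ --- is never actually used in the argument that follows; the second and third paragraphs are self-contained and do not reference $\hat P$, so that material is purely motivational and could be cut. Second, when you select $A$ achieving the estimate from Lemma~\ref{le:irr-e}, this choice depends on $k$; this is harmless since the off-diagonal blocks of any $A\in\A$ are uniformly bounded by compactness, but it deserves to be said explicitly, as does the passage between the Euclidean norm on $\A_n$ and the block norms in the conjugated frame (these differ only by the constants $\|X\|\,\|X^{-1}\|$, which get absorbed). With that bookkeeping made precise, the argument is sound.
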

\begin{proof}[Proof of Theorem \ref{th:no-sudden-moves}]
We will prove the theorem by induction on $d$. In the case $d=1$ we have $\max_{A \in \A_n}\|A_n\|=1$ for every $n$ and the result is trivial. For the remainder of the proof we fix an integer $d>1$ and a compact nonempty set $\A =\{A_i \colon i \in \I\} \subset M_d(\R)$ and suppose that all cases of the theorem in dimension strictly smaller than $d$ have been proved. If $\A$ is irreducible then  $\max_{A \in \A_n}\|A\| \simeq 1$ by Corollary \ref{co:lon} and the result follows, so we further  suppose that $\A$ is reducible. Choose an invertible matrix $X \in M_d(\R)$, integer $k \geq 1$ and sets of matrices $\B=\{B_i \colon i \in \I\}\subset M_k(\R)$, $\D=\{D_i \colon i \in \I\}\subset M_{k \times (d-k)}(\R)$ and $\C=\{C_i\colon i \in \I\}\subset M_{d-k}(\R)$ such that for every $i \in \I$ we have
\begin{equation}\label{eq:zeds}A_i=X^{-1}\begin{pmatrix} B_i & D_i \\ 0 &C_i\end{pmatrix}X.\end{equation}
We moreover take $k$ to be the smallest possible integer such that this construction is possible. It is clear that $\mathsf{B}$, $\mathsf{C}$ and $\mathsf{D}$ are compact since they are continuous images of $\mathsf{A}$.
 We observe that $\B$ is necessarily irreducible: if this were not the case then by a further change of basis we could write
\[A_i=X^{-1}Y^{-1}\begin{pmatrix} B_i^{(1)} & B^{(2)}_i & D_i^{(1)} \\
0&B_i^{(2)}& D_i^{(2)} \\
\\ 0 &0&C_i\end{pmatrix}YX =X^{-1}Y^{-1} \begin{pmatrix} B_i^{(1)} & D_i'\\ 0&C_i'\end{pmatrix}YX,\] 
for every $i \in \I$ where $B_i^{(1)} \in M_\ell(\R)$, $B_i^{(2)} \in M_{\ell \times (k-\ell)}(\R)$, $D_i^{(1)} \in M_{\ell \times( d-k)}(\R)$, $D_i^{(2)} \in M_{(k-\ell) \times (d-k)}(\R)$, $D_i' \in M_{\ell \times (d-\ell)}(\R)$ and $C_i' \in M_{d-\ell}(\R)$, and where $\ell$ is an integer in the range $1 \leq \ell<k$. This would contradict the minimality of $k$ and is therefore impossible; we conclude that $\B$ must be irreducible as claimed.
 We will find it convenient to equip $M_d(\R)$ with the norm
\[\threebar{X^{-1}\begin{pmatrix} Z_1&Z_2 \\ Z_3&Z_4\end{pmatrix}X }:=\|Z_1\|+\|Z_2\|+\|Z_3\|+\|Z_4\|\]
where the dimensions of the block matrices $Z_i$ match those in \eqref{eq:zeds}. For every $n \geq 1$ and $i_1,\ldots,i_n \in \I$ this yields
\begin{align}\label{eq:yt}\threebar{A_{i_1}\cdots A_{i_n}} &= \threebar{X^{-1}\begin{pmatrix}B_{i_1}\cdots B_{i_n} & \sum_{j=1}^n B_{i_1}\cdots B_{i_{j-1}} D_{i_j} C_{i_{j+1}} \cdots C_{i_n} \\ 0& C_{i_1}\cdots C_{i_n}\end{pmatrix}X} \\\nonumber
&=\|B_{i_1}\cdots B_{i_n}\| +\left\|\sum_{j=1}^n B_{i_1}\cdots B_{i_{j-1}} D_{i_j} C_{i_{j+1}} \cdots C_{i_n}\right\|+\| C_{i_1}\cdots C_{i_n}\|. \end{align}
For the remainder of the proof we define for every $n \geq 1$
\[a_n:=\max_{i_1,\ldots,i_n \in \I} \threebar{A_{i_1}\cdots A_{i_n}}.\]

We proceed through a series of cases. By Proposition \ref{pr:uti} we have $\varrho(\A)=\max\{\varrho(\B), \varrho(\C)\}$ and in particular $\varrho(\C) \leq 1$. The first case which we consider is that in which $\varrho(\C)<1$. 
Since $\B$ is irreducible, by Proposition \ref{pr:irr}(i) there exists $K_1>0$ such that $\max_{B \in \B_n} \|B\| \leq K_1$ for every $n \geq 1$. Clearly there exists $K_2>0$ such that $\|D_i\|\leq K_2$ for every $i \in \I$, and since $\varrho(\C)<1$ we may choose $K_3>0$ and $\theta \in (0,1)$ such that $\max_{C \in \C_n} \|C\| \leq K_3\theta^n$ for every $n \geq 1$. It follows that for every $n \geq 1$ and $i_1,\ldots,i_n \in \I$ 
\begin{align*}\threebar{A_{i_1}\cdots A_{i_n}} &= \|B_{i_1}\cdots B_{i_n}\| +\left\|\sum_{j=1}^n B_{i_1}\cdots B_{i_{j-1}} D_{i_j} C_{i_{j+1}} \cdots C_{i_n}\right\|+\| C_{i_1}\cdots C_{i_n}\| \\
&\leq K_1 + \sum_{j=1}^n K_1K_2K_3 \theta^{n-j} + K_3\theta^n\\
& < K_1+ \frac{K_1K_2K_3}{1-\theta} +K_3.\end{align*}
We conclude that $(a_n)$ is bounded. Since also $\inf_{n\geq 1} a_n>0$ by Corollary \ref{co:lon} we deduce that $(a_n) \simeq 1$, and the conclusion of the theorem clearly applies to $(a_n)$ in this case.

In the remaining cases we have  $\varrho(\C)=1$. By the induction hypothesis there exists $\delta \in (0,1]$ such that the sequence
\[c_n:=\max_{i_1,\ldots,i_n \in \I} \|C_{i_1}\cdots C_{i_n}\|\]
satisfies $c_{n+1} \geq \delta c_n$ for every $n \geq 1$. Since $\mathsf{B}$ is irreducible, by Lemma \ref{le:irr-e} there exists $\varepsilon \in (0,1]$ such that $\max_{i \in \I} \|B_i M\| \geq \varepsilon \|M\|$ whenever $M$ is a real matrix with $k$ rows. Define also $K:=\max\{1, \max_{i \in \I} \|D_i\|\}$. We fix these constants $\varepsilon, \delta, K$ for the remainder of the proof.

 Fix $n \geq 1$ and choose $i_1,\ldots,i_n \in \I$ such that $a_n=\threebar{A_{i_1}\cdots A_{i_n}}$. If we have
\[\|B_{i_1}\cdots B_{i_n}\| \geq \frac{1}{2}\threebar{A_{i_1}\cdots A_{i_n}}\]
then using Lemma \ref{le:irr-e} we may choose $i_0 \in \I$ such that
 \[\|B_{i_0}\cdots B_{i_n}\| \geq \varepsilon \|B_{i_1}\cdots B_{i_n}\|\]
and therefore
\begin{equation}\label{eq:ea}a_{n+1} \geq \threebar{A_{i_0}\cdots A_{i_n}} \geq \|B_{i_0}\cdots B_{i_n}\| \geq \varepsilon \|B_{i_1}\cdots B_{i_n}\| \geq \frac{\varepsilon}{2}\threebar{A_{i_1}\cdots A_{i_n}}=\frac{\varepsilon}{2}a_n.\end{equation}
 If instead we have both
\begin{equation}\label{eq:hiro}\|B_{i_1}\cdots B_{i_n}\| < \frac{1}{2}\threebar{A_{i_1}\cdots A_{i_n}}\end{equation}
and
\[\left\|\sum_{j=1}^n B_{i_1}\cdots B_{i_{j-1}} D_{i_j} C_{i_{j+1}} \cdots C_{i_n}\right\|>\left(\frac{2K}{\varepsilon}+1\right)\|C_{i_1}\cdots C_{i_n}\|\]
then we again apply Lemma \ref{le:irr-e}, this time to choose $i_0 \in \I$ such that 
\[\left\|B_{i_0}\sum_{j=1}^n B_{i_1}\cdots B_{i_{j-1}} D_{i_j} C_{i_{j+1}} \cdots C_{i_n}\right\|\geq \varepsilon \left\|\sum_{j=1}^n B_{i_1}\cdots B_{i_{j-1}} D_{i_j} C_{i_{j+1}} \cdots C_{i_n}\right\|.\]
We now have
\begin{align}\label{eq:eb} a_{n+1} &\geq \threebar{A_{i_0}\cdots A_{i_n}}\\\nonumber
&= \threebar{X^{-1}\begin{pmatrix}B_{i_0}&D_{i_0} \\ 0&C_{i_0}\end{pmatrix}\begin{pmatrix}B_{i_1}\cdots B_{i_n} & \sum_{j=1}^n B_{i_1}\cdots B_{i_{j-1}} D_{i_j} C_{i_{j+1}} \cdots C_{i_n} \\ 0& C_{i_1}\cdots C_{i_n}\end{pmatrix}X}\\\nonumber
&\geq \left\|B_{i_0}\left(\sum_{j=1}^n B_{i_1}\cdots B_{i_{j-1}} D_{i_j} C_{i_{j+1}} \cdots C_{i_n}\right) + D_{i_0}C_{i_1}\cdots C_{i_n}\right\|\\\nonumber
&\geq \varepsilon \left\|\sum_{j=1}^n B_{i_1}\cdots B_{i_{j-1}} D_{i_j} C_{i_{j+1}} \cdots C_{i_n}\right\| - K\|C_{i_1}\cdots C_{i_n}\| \\\nonumber
&>\frac{\varepsilon}{2}\left(\left\|\sum_{j=1}^n B_{i_1}\cdots B_{i_{j-1}} D_{i_j} C_{i_{j+1}} \cdots C_{i_n}\right\| + \|C_{i_1}\cdots C_{i_n}\| \right) \\\nonumber
&>\frac{\varepsilon}{4} \threebar{A_{i_1}\cdots A_{i_n}}=\frac{\varepsilon}{4} a_n\end{align}
where we have used the inequality
\begin{equation}\label{eq:prota}\threebar{A_{i_1}\cdots A_{i_n}} \leq 2\left(\left\|\sum_{j=1}^n B_{i_1}\cdots B_{i_{j-1}} D_{i_j} C_{i_{j+1}} \cdots C_{i_n}\right\| + \|C_{i_1}\cdots C_{i_n}\| \right)\end{equation}
which follows from the combination of \eqref{eq:yt} with \eqref{eq:hiro}. Finally, in the case in which
\[\|B_{i_1}\cdots B_{i_n}\| < \frac{1}{2}\threebar{A_{i_1}\cdots A_{i_n}}\]
and
\[\left\|\sum_{j=1}^n B_{i_1}\cdots B_{i_{j-1}} D_{i_j} C_{i_{j+1}} \cdots C_{i_n}\right\|\leq\left(\frac{2K}{\varepsilon}+1\right)\|C_{i_1}\cdots C_{i_n}\|,\]
we find that
\begin{align}\label{eq:ec}a_n &=\threebar{A_{i_1}\cdots A_{i_n}}\\\nonumber
& \leq 2\left(\left\|\sum_{j=1}^n B_{i_1}\cdots B_{i_{j-1}} D_{i_j} C_{i_{j+1}} \cdots C_{i_n}\right\|+\|C_{i_1}\cdots C_{i_n}\|\right)\\\nonumber
&\leq \left(\frac{4K}{\varepsilon}+4\right) \|C_{i_1}\cdots C_{i_n}\|\\\nonumber
&\leq  \left(\frac{4K}{\varepsilon}+4\right)c_n\\\nonumber
&\leq  \left(\frac{4K}{\varepsilon}+4\right)\delta^{-1} c_{n+1}\\\nonumber
&\leq  \left(\frac{4K}{\varepsilon}+4\right)\delta^{-1}a_{n+1}\end{align}
where we have used \eqref{eq:prota}, the induction hypothesis and the elementary inequality $a_m \geq c_m$ which holds for every $m \geq 1$ by virtue of \eqref{eq:yt}. Combining the three estimates \eqref{eq:ea}, \eqref{eq:eb} and \eqref{eq:ec} it follows that in the case where $\varrho(\C)=1$ we always have
\[a_{n+1} \geq \min\left\{\frac{\varepsilon}{2}, \frac{\varepsilon}{4}, \frac{\delta\varepsilon}{4K+4\varepsilon}\right\} a_n\geq\left(\frac{\delta \varepsilon}{4K+4}\right)a_n\]
for every $n \geq 1$. This completes the proof of the induction step and the theorem is proved.
\end{proof}

%
%

\section{Two extensions of an example of Protasov and Jungers}

In \cite[Example 3.1]{GuZe01} N. Guglielmi and M. Zennaro showed that for every $\alpha \in (0,1)$ the marginal instability rate sequence $(a_n)$ of the compact infinite set $\A \subset M_2(\R)$ defined by
\[\A=\left\{ \begin{pmatrix}
1&\theta \\ 0 &1-\theta^{\frac{1}{1-\alpha}}
\end{pmatrix} \colon \theta \in [0,1]
\right\}\]
satisfies $a_n \simeq n^\alpha$. In particular this demonstrated that marginal instability rate sequences $(a_n)$ may grow as non-integer powers of $n$. The first illustration of a similar phenomenon for \emph{finite} sets $\A$ was given by Protasov and Jungers in \cite{PrJu15}: they constructed a pair of matrices $\A=\{A_0,A_1\} \subset M_3(\R)$ for which the marginal instability rate sequence $(a_n)$ satisfies $a_n \apprle n^{1/3}$ and also satisfies $n_\ell^{1/3} \apprle a_{n_\ell}$ along a certain sparse subsequence $(n_\ell)$. In this section we will extend Protasov and Jungers' example by allowing different values for the exponent, and also show how this example can be extended to give the stronger result $a_n \simeq n^{1/3}$ at the cost of increasing the dimension from $3$ to $6$. We prove:
\begin{theorem}\label{th:pyjama}
Let $\alpha \in [\frac{1}{3},\frac{1}{2})$ and let $\|\cdot\|$ be the Euclidean norm on $\R^3$. Then there exists $\theta \in \R$ such that for the pair of matrices $\A=\{A_0, A_1\}$ defined by
\[A_0:=\begin{pmatrix}1&0&0\\ 0&1&0\\ 0&0&0\end{pmatrix},\qquad  A_1:=\begin{pmatrix}1&\sin \theta&\cos\theta -1\\ 0&\cos \theta & -\sin\theta \\ 0&\sin\theta &\cos\theta\end{pmatrix}\]
the marginal instability rate sequence $(a_n)$ of $\A$ satisfies $a_n \apprle n^\alpha$, and furthermore along a certain subsequence $(n_\ell)_{\ell=1}^\infty$ we have $n_{\ell}^\alpha \apprle a_{n_\ell}$. In the case $\alpha=\frac{1}{3}$ we may choose the subsequence $(n_\ell)$ in such a way that the sequence $(n_{\ell+1}/n_\ell)$ is bounded. 
\end{theorem}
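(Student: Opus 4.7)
The plan is to exploit the block upper-triangular form of both $A_0$ and $A_1$. Writing $\R^3 = \R \oplus \R^2$, one has
\[A_0 = \begin{pmatrix} 1 & 0 \\ 0 & P \end{pmatrix}, \qquad A_1 = \begin{pmatrix} 1 & w^T \\ 0 & R_\theta \end{pmatrix},\]
where $P = \operatorname{diag}(1,0)$ and $R_\theta$ is the standard $2\times 2$ rotation by $\theta$. A direct calculation gives the key algebraic identity $w^T = e_2^T(R_\theta - I)$. Since $\|P\| = \|R_\theta\| = 1$ the family $\{P, R_\theta\}$ is product bounded, so Proposition \ref{pr:uti} yields $\varrho(\A) = 1$, and Lemma \ref{le:tricalc} reduces $\|A_{i_1} \cdots A_{i_n}\|$ up to a bounded multiplicative constant to $1 + \|v_n\|$, where $v_n$ is the upper right block of the product.

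Next I would evaluate $v_n$ in closed form. Iterating the block product gives $v_n^T = \sum_{j\colon i_j = 1} w^T M_{i_{j+1}} \cdots M_{i_n}$ with $M_0 := P$ and $M_1 := R_\theta$. Decompose the word $i_1 \cdots i_n$ into maximal runs $1^{a_1} 0^{b_1} \cdots 1^{a_r} 0^{b_r}$ with $a_k, b_k \geq 1$ (boundary cases handled separately). Using $w^T = e_2^T(R_\theta - I)$ to telescope $\sum_{\ell=0}^{a-1} w^T R_\theta^\ell = e_2^T(R_\theta^a - I)$, together with the elementary relations
\[e_2^T(R_\theta^a - I)P = -\sin(a\theta)\,e_1^T, \qquad e_1^T R_\theta^a P = \cos(a\theta)\,e_1^T, \qquad e_1^T P = e_1^T,\]
each successive block collapses to a scalar cosine factor. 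The result is
\[v_n = -e_1 \cdot S_r, \qquad S_r := \sum_{k=1}^r \sin(a_k \theta) \prod_{j=k+1}^r \cos(a_j \theta),\]
so the problem reduces to bounding the trigonometric sum $|S_r|$ uniformly over all run-length vectors $(a_k)$, $(b_k)$ constrained by $\sum_k(a_k+b_k) = n$.

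For the choice of $\theta$ I would take $\theta = 2\pi \vartheta$ where $\vartheta$ is an irrational whose continued-fraction denominators $q_\ell$ satisfy $q_{\ell+1} \asymp q_\ell^{\alpha/(1-2\alpha)}$. For $\alpha = 1/3$ this exponent equals $1$, so any quadratic irrational (e.g.\ the golden ratio, giving Fibonacci denominators) works and $q_{\ell+1}/q_\ell$ remains bounded. For the lower bound I would take the periodic word $(1^{q_\ell} 0^{q_\ell})^r$ with $r$ of order $q_{\ell+1}^2$: the classical estimate $\|q_\ell \vartheta\| \asymp 1/q_{\ell+1}$ gives $\sin(q_\ell \theta) \asymp 1/q_{\ell+1}$ with constant sign and $\cos(q_\ell \theta) = 1 - O(1/q_{\ell+1}^2)$, so the cosine product remains bounded below and $|S_r| \asymp r/q_{\ell+1} \asymp q_{\ell+1}$. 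The total length is $n_\ell \asymp r q_\ell \asymp q_{\ell+1}^2 q_\ell$, and substituting $q_{\ell+1} \asymp q_\ell^{\alpha/(1-2\alpha)}$ makes $|S_r| \asymp n_\ell^\alpha$ exactly. In the case $\alpha = 1/3$ the boundedness of $q_{\ell+1}/q_\ell$ translates into boundedness of $n_{\ell+1}/n_\ell$.

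The main obstacle is the matching uniform upper bound $|S_r| \apprle n^\alpha$ valid over arbitrary words. The strategy is to partition the indices $k \in \{1,\ldots,r\}$ according to the scale of $a_k$ relative to the denominators $q_\ell$: terms whose $a_k$ lies close to a multiple of some $q_\ell$ have $|\sin(a_k\theta)|$ small by the Diophantine estimate, while those far from any such multiple have $|\sin(a_k\theta)|$ bounded below but simultaneously force the cosine products $\prod_{j > k} \cos(a_j\theta)$ to contract. Balancing these two contributions against the length constraint $\sum_k(a_k + b_k) = n$, and propagating through the one-step recursion $S_r = \cos(a_r\theta)S_{r-1} + \sin(a_r\theta)$, should yield the desired bound. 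This combinatorial-Diophantine balancing is the delicate part of the proof and is precisely what restricts the admissible exponents to the range $[1/3, 1/2)$.
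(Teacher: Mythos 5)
Your reduction to the trigonometric recursion $S_r = \cos(a_r\theta)S_{r-1} + \sin(a_r\theta)$, your Diophantine choice of $\theta$ via continued fractions with $q_{\ell+1}\asymp q_\ell^{\gamma}$ where $\gamma = \alpha/(1-2\alpha)$, and your lower-bound construction via the periodic word of run length $q_\ell$ all match the paper's proof essentially exactly. But the upper bound is where the theorem actually lives, and your treatment of it is a sketch with a genuine gap, not an argument. You propose to ``partition the indices $k$ according to the scale of $a_k$ relative to the denominators $q_\ell$'' and then ``balance'' contributions from small-sine terms against the contraction coming from cosine products. As written this does not close: there is no obvious way to convert that dichotomy into a bound uniform over all run-length vectors $(a_k)$ with $\sum a_k \leq n$, because a single long run with $\sin(a_k\theta)$ merely moderate in size can destroy the balance, and you give no mechanism for controlling the accumulated error through the recursion.

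The paper's route is quite different at this crucial point. It proves an elementary pointwise inequality (Lemma \ref{le:bend}): for every $\beta > 0$ there is a constant $C$ such that
\[|\sin\phi| + p|\cos\phi| \leq \Bigl(p^{2+\beta} + \frac{C}{|\sin\phi|^{\beta}}\Bigr)^{\frac{1}{2+\beta}}\]
for all $p \geq \delta$ and $\phi\notin\pi\Z$. Applying this with $\beta = 1/\gamma$, so that $1/(2+\beta)=\alpha$, and combining with the Diophantine lower bound $|\sin n\theta|\geq \kappa n^{-\gamma}$, the one-step recursion $z_k = |\sin n_k\theta| + z_{k-1}|\cos n_k\theta|$ is converted into the \emph{additive} recursion $z_k^{1/\alpha}\leq z_{k-1}^{1/\alpha} + K n_k$. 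This telescopes instantly to $z_m^{1/\alpha}\apprle \sum_k n_k\leq n$, giving $z_m\apprle n^\alpha$ with no case analysis on the word structure whatsoever. That single lemma is the missing idea in your proposal: it replaces the unclear ``combinatorial-Diophantine balancing'' with a clean superadditivity statement that propagates automatically through the recursion you already identified.
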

Combining the above with Theorem \ref{th:BIG-TWOS} yields:
\begin{corollary}\label{co:harvey}
There exists $\theta \in \R$ such that the set $\B=\{B_0,B_1\}\subset M_6(\R)$ defined by
\[B_0:=\begin{pmatrix}
0&0&0&1&0&0\\
0&0&0&0&1&0\\
0&0&0&0&0&1\\
1&0&0&0&0&0\\
0&1&0&0&0&0\\
0&0&1&0&0&0\end{pmatrix},
\qquad
B_1:=\begin{pmatrix}
1&0&0&0&0&0\\
0&1&0&0&0&0\\
0&0&0&0&0&0\\
0&0&0&1&\sin \theta&\cos\theta -1\\ 
0&0&0&0&\cos \theta & -\sin\theta \\
0&0&0&0&\sin\theta &\cos\theta\end{pmatrix}
\]
satisfies $\varrho(\B)=1$ and has marginal instability rate sequence $(b_n)$ satisfying $b_n \simeq n^{\frac{1}{3}}$.
\end{corollary}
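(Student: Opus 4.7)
The plan is to invoke Theorem~\ref{th:BIG-TWOS}(a) with the pair $\A = \{A_0, A_1\} \subset M_3(\R)$ supplied by Theorem~\ref{th:pyjama} in the case $\alpha = \tfrac{1}{3}$. The first thing I would check is that the explicit matrices $B_0, B_1 \in M_6(\R)$ displayed in the corollary are identical to those produced by the construction of Theorem~\ref{th:BIG-TWOS} from this $\A$ with $m=2$ and $d=3$: when $m=2$ the cyclic block permutation defining $B_0$ collapses to the $6 \times 6$ block swap with $3 \times 3$ identity blocks off the diagonal, while the block diagonal matrix $B_1$ becomes $\operatorname{diag}(A_0, A_1)$. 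Since the matrices $A_0, A_1$ appearing are exactly those of Theorem~\ref{th:pyjama}, the match is immediate.

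Next I would set $\gamma_n := n^{1/3}$ and verify the remaining hypotheses of Theorem~\ref{th:BIG-TWOS}(a). The sequence $(\gamma_n)$ is monotone non-decreasing, hence weakly increasing with constant $\kappa = 1$, and it is weakly upper regularly varying with $C_m := m^{1/3}$ since $(mn)^{1/3} = m^{1/3} n^{1/3}$. Theorem~\ref{th:pyjama} at $\alpha = \tfrac{1}{3}$ then provides a strictly increasing sequence $(n_\ell)_{\ell=1}^\infty$ with $n_{\ell+1}/n_\ell$ bounded above along which $n_\ell^{1/3} \apprle a_{n_\ell}$, together with the global upper bound $a_n \apprle n^{1/3}$. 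These are precisely the comparisons $a_n \apprle \gamma_n$ and $\gamma_{n_\ell} \apprle a_{n_\ell}$ demanded by the hypothesis.

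Applying Theorem~\ref{th:BIG-TWOS}(a) then yields at once both $\varrho(\B) = 1$ and $b_n \simeq \gamma_n = n^{1/3}$, which is the content of the corollary. No genuine obstacle remains at this stage: the substantive work was carried out in Theorem~\ref{th:pyjama} (controlling $(a_n)$ from above at every $n$ and from below along a subsequence with bounded ratio gaps) and in Theorem~\ref{th:BIG-TWOS}(a) (whose role is precisely to promote such a subsequential lower bound to the full asymptotic equivalence by means of the two-element set $\{B_0, B_1\}$); the corollary itself is a direct assembly of these two ingredients.
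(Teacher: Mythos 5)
Your proposal is correct and follows exactly the paper's own route: take $A_0, A_1$ and $(n_\ell)$ from Theorem~\ref{th:pyjama} at $\alpha = \tfrac{1}{3}$, set $\gamma_n := n^{1/3}$, and apply Theorem~\ref{th:BIG-TWOS}(a). The explicit verifications you include (that the displayed $B_0, B_1$ are the $m=2$, $d=3$ case of the Theorem~\ref{th:BIG-TWOS} construction, and that $(n^{1/3})$ is weakly increasing and weakly upper regularly varying) are details the paper leaves implicit, but the argument is the same.
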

\begin{proof}
Let $A_0,A_1$ and $(n_\ell)_{\ell=1}^\infty$ be as given by Theorem \ref{th:pyjama} with $\alpha=1/3$, where we note that the sequence $(n_{\ell+1}/n_\ell)_{\ell=1}^\infty$ is bounded. Apply Theorem \ref{th:BIG-TWOS}(a) with $(n_\ell)$ as given by Theorem \ref{th:pyjama} and with $\gamma_n:=n^{\frac{1}{3}}$.
\end{proof}
Our strategy of proof for Theorem \ref{th:pyjama} will follow that of \cite[Theorem 3]{PrJu15} and requires two lemmas. Our first lemma generalises an estimate used by Protasov and Jungers \cite[Lemma 3]{PrJu15}:
\begin{lemma}\label{le:bend}
Let $\beta>0$ and $0<\delta\leq 1$. Then there exists $C>0$ such that for all $\phi \in \R \setminus \pi\Z$ and $p \in [\delta,\infty)$
\[|\sin \phi| + p|\cos \phi| \leq \left(p^{2+\beta} + \frac{C}{|\sin \phi|^\beta} \right)^\frac{1}{2+\beta}.\]
\end{lemma}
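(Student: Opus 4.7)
The plan is to raise both sides of the desired inequality to the $(2+\beta)$-th power, which reduces the claim to showing
\[(s + pc)^{r} \leq p^{r} + \frac{C}{s^{\beta}},\]
where $s := |\sin\phi|$, $c := |\cos\phi|$, and $r := 2+\beta$; note that $s>0$ because $\phi \notin \pi\Z$, and $s^2 + c^2 = 1$. Factoring $p^r$ out of the left-hand side and setting $x := c + s/p$, the inequality becomes $p^{r}(x^{r}-1) \leq C s^{-\beta}$, which splits naturally according to the sign of $x-1$.

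If $x \leq 1$ the left-hand side is non-positive and there is nothing to prove. In the non-trivial case $x > 1$, the single observation that drives the whole argument is that $x > 1$ forces $s$ to be small compared with $1/p$: from $s/p > 1 - c$ combined with the identity $1 - c = s^{2}/(1+c) \geq s^{2}/2$, one extracts $s/p > s^{2}/2$, i.e., $s < 2/p$ (equivalently $p < 2/s$). This is the step I expect to be the real content of the lemma; everything else is routine.

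From there the plan is to bound $x^{r}-1$ by the mean value theorem for $t \mapsto t^{r}$, giving $x^{r} - 1 \leq r x^{r-1}(x-1)$. Using $x - 1 = (c-1) + s/p \leq s/p$ together with the crude bound $x \leq 1 + 1/\delta$ (which holds since $c \leq 1$, $s \leq 1$, and $p \geq \delta$), one obtains
\[p^{r}(x^{r}-1) \;\leq\; r\bigl(1+\tfrac{1}{\delta}\bigr)^{r-1}\, s\, p^{r-1}.\]
Substituting the key inequality $p^{r-1} = p^{1+\beta} < (2/s)^{1+\beta}$ from the preceding paragraph converts the right-hand side into a constant multiple of $s^{-\beta}$, so that $C := (2+\beta)\bigl(1+\tfrac{1}{\delta}\bigr)^{1+\beta}\, 2^{1+\beta}$ suffices, and depends only on $\beta$ and $\delta$ as required.
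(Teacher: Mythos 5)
Your proof is correct. It follows broadly the same strategy as the paper's: after raising to the power $2+\beta$, split according to whether the left-hand side already falls below $p^{2+\beta}$, and in the nontrivial case exploit the fact that $s$ must be of order $1/p$. Your split (on $x := c + s/p$ versus $1$) is slightly different from the paper's split (on $ps$ versus $2$), but the two are compatible: the paper's trivial case $ps>2$ yields $x<1$, and your nontrivial case $x>1$ yields $ps<2$, so each argument's trivial case simply covers (at least) what it needs to. The real difference is in how the nontrivial case is closed. The paper reduces it to finiteness of
\[
K := \sup_{p \ge \delta} \, p^{-\beta}\Bigl(\bigl(p+\tfrac{2}{p}\bigr)^{2+\beta} - p^{2+\beta}\Bigr),
\]
which it establishes by computing the limit as $p\to\infty$; the constant $C = 2^\beta K$ is therefore not explicit. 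You instead apply the convexity bound $x^{2+\beta} - 1 \le (2+\beta)\,x^{1+\beta}(x-1)$ together with $x - 1 \le s/p$, $x \le 1 + 1/\delta$, and $p < 2/s$, which closes the estimate in one stroke and gives the explicit constant $C = (2+\beta)(1+1/\delta)^{1+\beta}2^{1+\beta}$. Your route is marginally more elementary — it avoids the limit computation entirely — and the explicit constant is a small bonus; both proofs are sound.
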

\begin{proof}
We claim that the quantity
\[K:=\sup_{p \in [\delta,\infty)} p^{-\beta} \left(\left(p+\frac{2}{p}\right)^{2+\beta}-p^{2+\beta}\right)\]
is finite. Clearly it suffices to show that the function 
\[p\mapsto  p^{-\beta} \left(\left(p+\frac{2}{p}\right)^{2+\beta}-p^{2+\beta}\right)\]
extends continuously to the compact space $[\delta,\infty]$, so the claim follows if the limit 
\[\lim_{p \to \infty} p^{-\beta} \left(\left(p+\frac{2}{p}\right)^{2+\beta}-p^{2+\beta}\right)\]
exists and is finite. But we have
\begin{align*}{\lefteqn{\lim_{p \to \infty} p^{-\beta} \left(\left(p+\frac{2}{p}\right)^{2+\beta}-p^{2+\beta}\right)}}&\\
 &=\lim_{p \to \infty} \left(1+\frac{2}{p^2}\right)^\beta\left(p+\frac{2}{p}\right)^2-p^2 \\
&=\lim_{p \to \infty} p^2 \left(\left(1+\frac{2}{p^2}\right)^\beta-1\right) +4\left(1+\frac{2}{p^2}\right)^\beta + \frac{4}{p^2} \left(1+\frac{2}{p^2}\right)^\beta\\
&=2\beta+4\end{align*}
where the limit
\[\lim_{p \to \infty} p^2 \left(\left(1+\frac{2}{p^2}\right)^\beta-1\right)=\lim_{\varepsilon \searrow 0} \frac{(1+2\varepsilon)^\beta - 1}{\varepsilon}=2\beta\]
follows from elementary calculus, and we conclude that the quantity $K$ is finite as claimed.

We may now prove the lemma. Define $C:=2^\beta K>0$. Let $\phi \in \R\setminus \pi\Z$ and $p \geq \delta$ and define $s:=|\sin \phi|$. If $p \geq \delta$ satisfies $ps>2$ then we have
\[|\sin \phi| + p|\cos \phi| = s+p\sqrt{1-s^2} \leq s+p - \frac{ps^2}{2} <p<\left(p^{2+\beta} + \frac{C}{|\sin \phi|^\beta} \right)^\frac{1}{2+\beta}\]
as required. Otherwise $s \leq 2/p$ and therefore
\[s^\beta\left((p+s)^{2+\beta} -  p^{2+\beta}\right) \leq  2^\beta p^{-\beta}\left(\left(p+\frac{2}{p}\right)^{2+\beta}-p^{2+\beta}\right) \leq 2^\beta K =C.\]
By elementary rearrangements this yields
\[s+p \leq \left(p^{2+\beta}+Cs^{-\beta}\right)^{\frac{1}{2+\beta}}\]
and it follows that
\[|\sin \phi| + p|\cos \phi| < s+p \leq \left(p^{2+\beta} + \frac{C}{|\sin \phi|^\beta} \right)^\frac{1}{2+\beta}\]
as required. The proof is complete.
\end{proof}
We also require the following elementary number-theoretic construction:
\begin{lemma}\label{le:dio}
For every $\gamma \geq1$ there exist $\theta \in \R$ and a strictly increasing sequence of natural numbers $(n_\ell)_{\ell=1}^\infty$ such that
\[\inf_{n \geq 1} n^\gamma |\sin n\theta|>0, \qquad  \sup_{\ell \geq 1}  n_\ell^{\gamma}|\sin n_\ell\theta|<\infty.\]
In the case $\gamma=1$ we may additionally choose $(n_\ell)$ such that the sequence $(n_{\ell+1}/n_\ell)$ is bounded.
\end{lemma}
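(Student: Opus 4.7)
The plan is to translate the trigonometric statement into a classical Diophantine approximation problem and then realise the required number as a continued fraction with carefully chosen partial quotients. Setting $\alpha:=\theta/\pi$, the elementary inequalities $2\|t\|\leq |\sin(\pi t)|\leq \pi\|t\|$ (valid for $\|t\|\leq 1/2$, where $\|\cdot\|$ denotes distance to the nearest integer) show that, provided $\alpha$ is irrational, the quantity $|\sin n\theta|$ is comparable to $\|n\alpha\|$ whenever $\|n\alpha\|$ is small, while otherwise both sides are bounded away from zero. It therefore suffices to construct an irrational $\alpha$ and a sequence $(n_\ell)$ satisfying $\inf_{n\geq 1}n^\gamma\|n\alpha\|>0$ and $\sup_{\ell\geq 1}n_\ell^\gamma\|n_\ell\alpha\|<\infty$, together with boundedness of $n_{\ell+1}/n_\ell$ in the case $\gamma=1$.

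Write $\alpha=[a_0;a_1,a_2,\ldots]$ with convergents $p_k/q_k$, recalling the two facts I will rely on: $\|q_k\alpha\|\in (1/(2q_{k+1}),1/q_{k+1})$, and the best-approximation property $\|n\alpha\|\geq \|q_k\alpha\|$ whenever $1\leq n<q_{k+1}$. The strategy is to choose the partial quotients $a_{k+1}$ inductively so that
\[ q_k^\gamma \leq q_{k+1} \leq 2q_k^\gamma \]
for all large $k$. Because $q_{k+1}=a_{k+1}q_k+q_{k-1}$ increases in $a_{k+1}$ by increments of size $q_k$, and because the target window $[q_k^\gamma,2q_k^\gamma]$ has length $q_k^\gamma\geq q_k$ (here the hypothesis $\gamma\geq 1$ is used), such an integer $a_{k+1}$ always exists once $q_k$ is sufficiently large.

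Having fixed $\alpha$ in this way, take $n_\ell:=q_\ell$. Then $n_\ell^\gamma\|n_\ell\alpha\|\leq q_\ell^\gamma/q_{\ell+1}\leq 1$, giving the upper bound along the subsequence. For the universal lower bound, given any $n\geq 1$ choose $k$ with $q_k\leq n<q_{k+1}$; the best approximation property and the window above yield
\[ \|n\alpha\| \geq \|q_k\alpha\| \geq \frac{1}{2q_{k+1}} \geq \frac{1}{4q_k^\gamma} \geq \frac{1}{4n^\gamma}, \]
and the finitely many $n$ below $q_1$ are controlled by irrationality of $\alpha$. Finally, in the case $\gamma=1$ the window $[q_k,2q_k]$ forces the partial quotients $a_{k+1}$ to stay bounded (indeed essentially equal to $1$), so any quadratic irrational with bounded continued fraction expansion, e.g.\ the golden ratio, does the job; the ratio $n_{\ell+1}/n_\ell=q_{\ell+1}/q_\ell$ is then bounded. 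The only point requiring care is verifying that the target window is nonempty in integers — this is where the constraint $\gamma\geq 1$ is genuinely used, and it is the main (mild) obstacle.
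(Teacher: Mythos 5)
Your proof is correct and follows essentially the same route as the paper: translate $|\sin n\theta|$ to the distance-to-integers function $\|n\alpha\|$ with $\alpha=\theta/\pi$, construct $\alpha$ by continued fractions so that $q_{k+1}\asymp q_k^\gamma$, and read off the two-sided bounds from the convergent estimates $\|q_k\alpha\|\in(1/(2q_{k+1}),1/q_{k+1})$. The paper prescribes $a_k:=\lceil q_{k-1}^{\gamma-1}\rceil$ explicitly and handles the lower bound at non-convergent indices via Khinchin's Theorem 19, whereas you argue existentially for a partial quotient landing $q_{k+1}$ in $[q_k^\gamma,2q_k^\gamma]$ and invoke the best-approximation-of-the-second-kind property of convergents; these are cosmetic variants of the same argument.
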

\begin{proof}
For any $x \in \R$ and any nonempty closed set $K \subset \R$ let $\dist(x,K)$ denote the distance from $x$ to the nearest element of $K$. Since $(2/\pi)\dist (x,\pi \Z) \leq |\sin x| \leq \dist(x,\pi\Z)$ for all $x \in \R$, to prove the lemma it suffices to find $\vartheta \in \R$ and a strictly increasing sequence of natural numbers $(q_n)_{n=1}^\infty$ such that  
\begin{equation}\label{eq:cf1}\inf_{n \geq 1} n^{\gamma}\dist(n\vartheta, \Z)>0\end{equation} and
\begin{equation}\label{eq:cf2}\sup_{n \geq 1} q_n^{\gamma}\dist(q_n\vartheta, \Z)<\infty\end{equation}
since we may then define $\theta:=\pi\vartheta$ and $n_\ell:=q_\ell$ for every $\ell \geq 1$. We will construct $\vartheta$ via its continued fraction expansion. We will refer to Khinchin's classic text \cite{Kh97} for the relevant general results concerning continued fractions.

Define $q_{-1}:=0$, $p_{-1}:=1$, $q_0:=1$ and $p_0:=0$ and define sequences of natural numbers $(a_n)_{n=1}^\infty$, $(p_n)_{n=-1}^\infty$ and $(q_n)_{n=-1}^\infty$ inductively as follows. For each $n \geq 1$ define $a_n:=\lceil q_{n-1}^{\gamma-1} \rceil \in \N$, $p_n:=a_n p_{n-1}+p_{n-2}$ and $q_n:= a_n q_{n-1}+q_{n-2}$.  By Khinchin's Theorems 2 and 10 the resulting sequence $(p_n/q_n)$ converges to a limit $\vartheta \in (0,1)$. Moreover, for every $n \geq 1$ we have by Khinchin's Theorems 9 and 13
\[\frac{1}{2q_{n+1}}<|q_n \vartheta - p_n| <\frac{1}{q_{n+1}},\]
and if $k \in \N$ is not equal to any of the integers $q_n$ then by Khinchin's Theorem 19
\begin{equation}\label{eq:k19} \dist(k\vartheta,\Z)>\frac{1}{2k}.\end{equation}
The sequence $(q_n)_{n=1}^\infty$ satisfies $q_1=1$ and is strictly increasing as a simple consequence of its definition, so for every $k \in \N$ there necessarily exists a unique $n\in \N$ such that $q_n \leq k<q_{n+1}$. If $n \geq 1$ then the inequality $|q_n\vartheta-p_n|<1/q_{n+1}\leq 1/2$ implies that $p_n$ is the closest integer to $q_n\vartheta$, so in this case $\dist(q_n\vartheta,\Z)=|q_n\vartheta-p_n|>1/2q_{n+1}$. Combining this with \eqref{eq:k19} we conclude that for every $n \in \N$, for every $k$ in the range $q_n \leq k <q_{n+1}$ we have
\[\dist(k\vartheta,\Z)>\frac{1}{2q_{n+1}}.\]
Now let $k \in \N$ be arbitrary and choose $n\in \N$ such that $q_n\leq k<q_{n+1}$. We have
\[\dist(k\vartheta,\Z)>\frac{1}{2q_{n+1}} =\frac{1}{2(\lceil q_n^{\gamma-1} \rceil q_n+q_{n-1})}>\frac{1}{2(q_n^{\gamma}+q_n+q_{n-1})}\geq\frac{1}{6q_n^{\gamma}}\geq \frac{1}{6k^{\gamma}}\]
and this proves \eqref{eq:cf1}. On the other hand for every $n \geq 1$ we have
\[\dist(q_n\vartheta,\Z)=|q_n \vartheta - p_n| <\frac{1}{q_{n+1}}=\frac{1}{\lceil q_n^{\gamma-1} \rceil q_n+q_{n-1}}<\frac{1}{q_n^{\gamma}}\]
and this proves \eqref{eq:cf2}. In the case $\gamma=1$ we have $q_n =q_{n-1}+q_{n-2} \leq 2q_{n-1}$ for every $n \geq 1$ which implies the boundedness of the sequence $(q_{n+1}/q_n)_{n=1}^\infty = (n_{\ell+1}/n_\ell)_{\ell=1}^\infty$. The proof of the lemma is complete.
\end{proof}

\begin{proof}[Proof of Theorem \ref{th:pyjama}]
Given $\alpha \in [\frac{1}{3},\frac{1}{2})$ define $\gamma:=\frac{\alpha}{1-2\alpha}\geq1$, let $\theta \in \R$ be as given by Lemma \ref{le:dio} and let $\A=\{A_0, A_1\}$ and $(a_n)$ be as given in the statement of the theorem. We observe that if $\alpha=\frac{1}{3}$ then $\gamma=1$. In order to establish the bound $a_n=O(n^\alpha)$ we begin with the following claim: there exists a constant $K_1>0$ such that for all integers $m\geq 1$ and $n_1,\ldots,n_m\geq 1$,
\[\left\|\left(A_0A_1^{n_1}A_0\right)\left(A_0A_1^{n_2}A_0\right) \cdots \left(A_0A^{n_m}_1A_0\right)\right\| \leq 1+ \left(K_1\sum_{i=1}^m n_i\right)^\alpha.\] 

A straightforward induction demonstrates that for all $n \geq 0$
\[A_1^n=\begin{pmatrix}1&\sin n\theta&\cos n\theta -1\\ 0&\cos n\theta & -\sin n\theta \\ 0&\sin n\theta &\cos n\theta\end{pmatrix}\]
and therefore
\begin{equation}\label{eq:that-one}A_0A_1^nA_0 = \begin{pmatrix}1&\sin n\theta&0\\ 0&\cos n\theta &0 \\ 0&0 &0\end{pmatrix}. \end{equation}
Fix integers $m \geq 1$ and $n_1,\ldots,n_m \geq 1$. For each $k \geq 1$ we may write
\[\left(A_0A_1^{n_1}A_0\right)\left(A_0A_1^{n_2}A_0\right) \cdots \left(A_0A_1^{n_k}A_0\right) =  \begin{pmatrix}1&w_k&0\\ 0&\prod_{i=1}^k \cos n_i\theta &0 \\ 0&0 &0\end{pmatrix} \]
where the real numbers $w_1,\ldots, w_m$ satisfy $w_1=\sin n_1\theta$ and $w_{k}=\sin n_{k} \theta + w_{k-1} \cos n_{k}\theta$ for every $k=2,\ldots,m$. Define further real numbers $z_1,\ldots,z_m$ inductively by $z_1:=|w_1|$ and $z_{k}=|\sin n_{k}\theta|+ z_{k-1} |\cos n_{k}\theta|$ for every $k=2,\ldots,m$. A trivial induction demonstrates that $|w_i|\leq z_i$ for every $i=1,\ldots,m$. Define additionally $z_0:=0$. By Lemma \ref{le:tricalc} we have
\[\left\|\left(A_0A_1^{n_1}A_0\right)\left(A_0A_1^{n_2}A_0\right) \cdots \left(A_0A_1^{n_m}A_0\right)\right\| \leq 1+|w_m|\leq 1+z_m\]
and so the claim will follow if we establish that $z_m \leq (K\sum_{i=1}^m n_i)^\alpha$ for some constant $K>0$ depending only on $\theta$.

By Lemma \ref{le:dio} there exists $\kappa>0$ such that $|\sin n\theta|\geq \kappa n^{-\gamma}$ for every integer $n \geq 1$. In particular for every $n \geq 1$ we have $n\theta \notin \R\setminus \pi\Z$, so applying Lemma \ref{le:bend} with $\beta:=1/\gamma \in (0,1]$ and $\delta:=1$ there exists $C>0$ such that
\begin{equation}\label{eq:pee}|\sin n\theta| + p|\cos n\theta| \leq \left(p^{2+\frac{1}{\gamma}} + \frac{C}{|\sin n\theta|^{\frac{1}{\gamma}}}\right)^{\frac{1}{2+\frac{1}{\gamma}}}= \left(p^{\frac{1}{\alpha}} + \frac{C}{|\sin n\theta|^{\frac{1}{\gamma}}}\right)^\alpha \end{equation}
for every positive integer $n$ and real number $p\geq 1$. Define $K:=\max\{2^{1/\alpha}, C\kappa^{-1/\gamma}\}$. If  $z_{m-1}<1$ then clearly
\[z_m=|\sin n_m\theta|+ z_{m-1}|\cos n_m\theta|< 2\leq K^\alpha  \leq\left(K\sum_{i=1}^m n_i\right)^\alpha\]
as required. Otherwise there exists an integer $r$ in the range $1 \leq r < m$ such that $z_{r-1}<1$ and $z_{i} \geq 1$ for all $i=r,\ldots,m-1$. For all $i=r+1,\ldots,m$ we have
\[z_{i}=|\sin n_{i}\theta|+ z_{i-1} |\cos n_{i}\theta| \leq \left(z_{i-1}^{\frac{1}{\alpha}} +\frac{C}{|\sin n_i \theta|^{\frac{1}{\gamma}}}\right)^{\alpha} \leq \left(z_{i-1}^{\frac{1}{\alpha}} +C\kappa^{-\frac{1}{\gamma}} n_i\right)^{\alpha}\]
where the inequality $z_{i-1} \geq 1$ permits the application of \eqref{eq:pee}. Thus for every $i=r+1,\ldots,m$ we have
\[z_i^{\frac{1}{\alpha}} \leq z_{i-1}^{\frac{1}{\alpha}} + Kn_i\]
so that by a trivial induction
\[z_m^{\frac{1}{\alpha}} \leq z_r^{\frac{1}{\alpha}} + K\sum_{i=r+1}^m n_i.\]
Since $0 \leq z_{r-1}<1$ we have
\[z_r=|\sin n_r\theta|+z_{r-1}|\cos n_r\theta|\leq |\sin n_r\theta| +|\cos n_r\theta|<2,\]
whence
\[z_m^{\frac{1}{\alpha}}  \leq z_r^{\frac{1}{\alpha}} + K\sum_{i=r+1}^m n_i< 2^{\frac{1}{\alpha}} + K\sum_{i=r+1}^m n_i \leq K\sum_{i=r}^m n_i \leq K\sum_{i=1}^m n_i.\]
We conclude that in all cases
\[\left\|\left(A_0A_1^{n_1}A_0\right)\left(A_0A_1^{n_2}A_0\right) \cdots \left(A_0A_1^{n_m}A_0\right)\right\| \leq 1+z_m \leq 1+\left(K\sum_{i=1}^m n_i\right)^\alpha\]
and the claim is proved. 

We may now prove the upper bound in the statement of Theorem \ref{th:pyjama}. We observe that $A_0^2=A_0$ and therefore $A_0^n=A_0$ for every $n \geq 1$. It follows in particular that $\|A_0^n\|=1$ for all $n \geq 0$. On the other hand since $A_1$ is diagonalisable over $\mathbb{C}$ the sequence $(A_1^n)_{n=1}^\infty$ is bounded and we may choose $K_2 \geq 1$ such that $\|A_1^n\| \leq K_2$ for every $n \geq 0$. Now let $M \in M_3(\R)$ be a product of the matrices $A_0$, $A_1$ of length $n \geq 1$. If $M$ includes a factor of the form $A_0 A_1^k A_0$ for some integer $k \geq 1$ then it may be written in the form
\[M= A_1^{k_1} A_0^{t_1} A_1^{r_1}A_0^{t_2} A_1^{r_2} \cdots A_0^{t_m}A_1^{r_m}A_0^{t_{m+1}} A_1^{k_2} \]
where $k_1,k_2\geq 0$,  where $r_i, t_i \geq 1$ for all $i$, and where $k_1+k_2+\sum_{i=1}^{m} r_i + \sum_{i=1}^{m+1} t_i=n$. Taking advantage of the identity $A_0=A_0^k$ for all $k \geq 1$ we have
\[M= A_1^{k_1} (A_0 A_1^{r_1}A_0)(A_0A_1^{r_2}A_0) \cdots (A_0 A_1^{r_m}A_0) A_1^{k_2}\]
and therefore
\begin{align*}\|M\| &\leq K_2^2 \left\| (A_0 A_1^{r_1}A_0)(A_0A_1^{r_2}A_0) \cdots (A_0 A_1^{r_m}A_0)\right\|\\
& \leq K_2^2\left(1+\left(K_1 \sum_{i=1}^m r_i\right)^\alpha\right) <2K_1^\alpha K_2^2 n^\alpha\end{align*}
using the preceding claim. 
If on the other hand $M$ does \emph{not} include a factor of the form $A_0 A_1^k A_0$ for any integer $k \geq 1$ then it can only have the form $M=A_1^{k_1} A_0^{k_2} A_1^{k_3}$ where $k_1, k_2, k_3$ are non-negative integers, and in this case we have
\[\|M\| \leq \|A_1^{k_1}\|\cdot \|A_0\|\cdot \|A_1^{k_3}\|\leq K_2^2< K_1^\alpha K_2^2 < 2K_1^\alpha K_2^2n^\alpha.\]
In all cases we have $\|M\| \leq 2K_1^\alpha K_2^2 n^\alpha$ and the upper bound is proved.

We now turn our attention to the lower bound. For each $n \geq 1$, a simple induction on $m\geq1$ using the formula \eqref{eq:that-one} demonstrates that
\[\left(A_0A_1^nA_0\right)^m=\begin{pmatrix} 1& \sum_{k=0}^{m-1} \sin n\theta (\cos n \theta)^k&0\\ 0&(\cos n\theta)^m&0\\ 0&0&0\end{pmatrix}=\begin{pmatrix} 1&(\sin n\theta) \frac{1- (\cos n \theta)^m}{1-\cos n\theta}&0\\ 0&(\cos n\theta)^m&0\\ 0&0&0\end{pmatrix}.\]
We have
\[|\cos n\theta|^m =\left|1-|\sin n\theta|^2\right|^{\frac{m}{2}} \leq \exp\left(-\frac{m|\sin n\theta|^2}{2}\right)\leq\exp\left(-\frac{\kappa^2 m}{2n^{2\gamma}}\right)  \]
using the elementary inequality $|1-x|\leq e^{-x}$ which is valid for all $x \in [0,1]$ followed by the inequality $|\sin n\theta| \geq \kappa n^{-\gamma}$ noted previously. Using the reverse triangle inequality we deduce that
\[|1-(\cos n\theta)^m| \geq 1-|\cos n\theta|^m \geq 1-\exp\left(-\frac{\kappa^2m}{2n^{2\gamma}}\right)>0\]
for all $n,m \geq 1$. If $\cos n\theta$ is positive, then we have additionally
\[\left|\frac{1}{1-\cos n\theta}\right| =\frac{1}{1-|\cos n\theta|} =\frac{1}{1-\sqrt{1-|\sin n\theta|^2}} \geq \frac{1}{|\sin n\theta|^2} \]
where we have used the inequality $\sqrt{1-x} \geq 1-x$ for $x \in [0,1]$. Combining the preceding estimates yields
\[\left\|(A_0A_1^nA_0)^m\right\| \geq \left|\frac{(\sin n\theta)(1-(\cos n\theta)^m)}{1-\cos n\theta}\right|\geq \frac{1-\exp\left(-\frac{\kappa^2m}{2n^{2\gamma}}\right)}{|\sin n\theta|}\]
for every pair of integers $n,m \geq 1$ such that $\cos n\theta>0$.

By Lemma \ref{le:dio} we may choose a real number $K>0$ and a sequence of integers $(q_n)_{n=1}^\infty$ increasing to infinity such that $|\sin q_n\theta|\leq Kq_n^{-\gamma}$ for every $n \geq 1$. If $\alpha=1/3$ then $\gamma=1$ and therefore in that case the sequence $(q_n)$ may be chosen so that the sequence of ratios $(q_{n+1}/q_n)$ is bounded. For all $n \geq 1$ we have $|\sin 2q_n\theta| =|2\sin q_n\theta \cos q_n\theta|\leq 2Kq_n^{-\gamma}$. If $n$ is large enough that $2K^2q_n^{-2\gamma}<1$ then we also have $2|\sin q_n\theta|^2<1$ and therefore $\cos 2q_n\theta = 1-2\sin^2 q_n\theta>0$. For all $n$ large enough that $2K^2q_n^{-2\gamma}<1$ we therefore have
\begin{align*}\left\|\left(A_0A_1^{2q_n}A_0\right)^{\left\lceil q_n^{2\gamma}\right\rceil}\right\| &\geq \frac{1-e^{-\frac{\kappa^2 \left\lceil q_n^{2\gamma}\right\rceil}{2\left(2q_n\right)^{2\gamma}}}}{\left|\sin 2q_n\theta\right|}\\
&\geq \frac{\left(1-e^{-\frac{\kappa^2}{2^{2\gamma+1}}}\right) q_n^\gamma}{2K}\\
&= \frac{\left(1-e^{-\frac{\kappa^2}{2^{2\gamma+1}}}\right) q_n^{(2\gamma+1)\alpha}}{2K}\geq \frac{1-e^{-\frac{\kappa^2}{2^{2\gamma+1}}}}{2^{6\alpha\gamma+1} K}\left(\lceil q_n^{2\gamma}\rceil(2q_n+2)  \right)^{\alpha}.
\end{align*}
In particular for every large enough $n \geq 1$ there exists a product of the matrices $A_0, A_1$ with length $\lceil q_n^{2\gamma}\rceil(2q_n+2)$ and norm greater than or equal to a constant times $(\lceil q_n^{2\gamma}\rceil(2q_n+2))^\alpha$. In the case $\alpha=1/3$, $\gamma=1$ the sequence of ratios $(q_{n+1}/q_n)$ is bounded above and therefore the sequence of ratios $(\lceil q_{n+1}^{2\gamma}\rceil(2q_{n+1}+2))/(\lceil q_n^{2\gamma}\rceil(2q_n+2))$ is also bounded above. We have established the claimed lower bound, and the theorem is proved.
\end{proof}

\section{Conclusions}

In the case of a single matrix (or a discrete linear system without switching) the class of marginal instability rate sequences is extremely constrained: every marginal instability rate sequence in $d$ dimensions is asymptotic to a monomial with a non-negative integer exponent strictly smaller than $d$, and no other cases are possible. The situation for sets with more than one matrix (or for discrete linear switched systems with arbitrary switching) seems to be very much the reverse: we have found no completely general constraints on the scope of behaviour of marginal instability rate sequences other than that the sequence must be bounded above by a constant times $n^{d-1}$, must bounded below by a constant and cannot fall or rise by an arbitrarily large proportion in a single step (that is, the ratio $a_{n}/a_{n+1}$ must be bounded away from both zero and infinity). We have also found abundant ways of constructing new marginal instability rate functions from old ones. These considerations taken together suggest that the class of marginal instability rate sequences is rather large and that relatively few properties hold universally within this class of sequences. We nonetheless ask the following question:

\begin{question}\label{qu:est} Are all marginal instability rate sequences weakly increasing, and are they all weakly upper regularly varying?\end{question}

We have considered the problem of modifying the argument used in Theorem \ref{th:no-sudden-moves} so as to attempt to show that every marginal instability rate sequence is weakly increasing. It is not difficult to establish an induction step along the same lines which shows that in the block upper triangular representation
\[\{A_i\colon i \in \I\}=\left\{\begin{pmatrix} B_i & D_i \\ 0&C_i\end{pmatrix} \colon i \in \I\right\}\]
the weakly increasing property holds for $\{A_i\}$ if it holds for either one of $\{B_i\}$ and $\{C_i\}$ while the other has joint spectral radius strictly less than $1$. The problematic case occurs when $\varrho(\{B_i\})=\varrho(\{C_i\})=1$. Here  it seems to become unavoidably necessary to consider a product of the form
\[\begin{pmatrix} B & D \\ 0&C\end{pmatrix}\begin{pmatrix} B' & D' \\ 0&C'\end{pmatrix}\]
in which $\|D\|$ and $\|D'\|$ are of similar magnitude. In this circumstance it is not clear how to avoid additive cancellation in the upper-right block of the product other than by imposing very strong \emph{a priori} hypotheses such as non-negativity of the blocks. (The latter hypothesis anyway seems likely  to be so strong as to restrict the marginal instability rate sequence to a very simple form such as $a_n \simeq n^k$ for integer $k$.) This leads us to suggest that the answer to the first part of Question \ref{qu:est} above might be negative, although we are not able to prove this.

We ask a further question which concerns the set of allowed exponents in marginal instability rate sequences $(a_n)$ with the property $a_n \simeq n^\alpha$.  Let $\mathscr{E} \subset [0,\infty)$ denote the set of all real numbers $\alpha \geq 0$ such that there exist $d\geq 1$ and a finite set $\A \subset M_d(\R)$ whose marginal instability rate sequence satisfies $a_n \simeq n^\alpha$. By Theorem \ref{th:BIG-TWOS}\eqref{it:you-know-what-will-get-you-you-know-where} we may without loss of generality assume that $\A$ has cardinality $2$. It follows from Proposition \ref{pr:stick-with-the-prod} that the set $\mathscr{E}$ is an additive semigroup, and by Corollary \ref{co:harvey} and Corollary \ref{co:lon} respectively we have $\frac{1}{3} \in \mathscr{E}$ and $0 \in \mathscr{E}$. We ask:
\begin{question} Do we have $\mathscr{E}=[0,\infty)$?\end{question}

\section{Acknowledgements} 

Versions of Theorems \ref{th:first-obe}, \ref{th:BIG-TWOS}, \ref{th:no-sudden-moves} and \ref{th:pyjama} previously appeared in the first named author's PhD thesis \cite{Va22}. J. Varney was supported by EPRSC Doctoral Training Partnership grant EP/R513350/1. I.D. Morris was partially supported by Leverhulme Trust Research Project Grant RPG-2016-194.

\bibliographystyle{acm}
\bibliography{vsb}

\end{document}